\newtheorem{thm}{Theorem}[section]
\newtheorem{lem}[thm]{Lemma}
\newtheorem{prop}[thm]{Proposition}
\newtheorem{cor}[thm]{Corollary}
\newtheorem{rem}[thm]{Remark}
\theoremstyle{remark}
\theoremstyle{claim}
\newtheorem*{thm*}{Theorem}
\theoremstyle{definition}
\newtheorem{definition}[thm]{Definition}
\theoremstyle{example}
\theoremstyle{convention}
\theoremstyle{convention}
\newcommand{\bbN}{\mathbb{N}}
\newcommand{\bbZ}{\mathbb{Z}}
\newcommand{\bbR}{\mathbb{R}}
\newcommand{\bbC}{\mathbb{C}}
\newcommand{\bbH}{\mathbb{H}}
\newcommand{\vol}{\mathrm{vol}}
\newcommand{\Ric}{\mathrm{Ric}}
\newcommand{\inj}{\mathrm{inj}}
\DeclareMathOperator{\Sym}{Sym}
\DeclareMathOperator{\tr}{tr}
\begin{document}
\title[Effective stability of negatively curved Einstein metrics]
{Effective stability of negatively curved Einstein metrics in dimensions at least $4$}
\author{Frieder J\"ackel}
\thanks
{AMS subject classification: 53C20, 53C25 \\
The author was supported by the DFG priority program "Geometry at infinity"}
\date{December 29, 2025}

\begin{abstract}
We show that if a closed manifold of dimension at least four admits a negatively curved metric that is almost Einstein in a suitable sense, then it admits a genuine Einstein metric of negative sectional curvature. Importantly, the pinching constant measuring the almost-Einstein condition neither depends on an upper bound for the diameter or volume, nor on a lower bound for the injectivity radius.
\end{abstract}

\maketitle

\tableofcontents

\section{Introduction}\label{Sec: Introduction}

\subsection{Statement of the main results} 

A classic method for the construction of Einstein metrics is to start with a metric $\bar{g}$ that is almost Einstein in a suitable sense, and then obtain an Einstein metric from $\bar{g}$ by a suitable perturbation procedure. This can for example be done using the Ricci flow (see for example \cite{MO90}). Another possibility is to apply the inverse function theorem to the so-called \textit{Einstein operator} (see for example \cite{Biq00},\cite{And06},\cite{Bam12},\cite{FP20},\cite{HJ24}). Recently, closely following an unpublished preprint of Tian \cite{Tian}, Hamenst\"adt and the author \cite{HJ22} used the Einstein operator to show that metrics with sectional curvature close to $-1$ and whose injectivity radius is uniformly bounded from below can be perturbed to an Einstein metric. The main goal of this note is to extend \cite[Theorem 1]{HJ22} to closed manifolds with arbitrary small injectivity radius.

\begin{thm}\label{Main Theorem - introduction}
For all \(n \geq 4\), \(\alpha \in (0,1)\), \(K > 0\), and \(\Lambda \geq 0\) there exist positive constants $\varepsilon_0=\varepsilon_0(n,\alpha,K,\Lambda)>0$ and $C=C(n,\alpha,K,\Lambda) >0$ with the following property. Let \(M\) be a closed \(n\)-manifold that admits a complete Riemannian metric \(\bar{g}\) satisfying the following conditions for some \(\varepsilon \leq \varepsilon_0\):
\begin{enumerate}[(i)]
\item $||{\rm Ric}(\bar{g})+(n-1)g||_{C^0(M,\bar{g})} \leq \varepsilon$ and $||{\rm Ric}(\bar{g})+(n-1)g||_{L^2(M,\bar{g})} \leq \varepsilon$;
\item $\mathrm{sec}(M,\bar{g}) \leq -K < 0$;
\item \(\sec(M,\bar{g})=-1\) in \(M_{\rm thin}\);
\item \(|| {\nabla} {\rm Ric}(\bar{g})||_{C^0(M,\bar{g})} \leq \Lambda\).
\end{enumerate}
Then there exists an Einstein metric \(g_0\) on \(M\) with \({\rm Ric}(g_0)=-(n-1)g_0\) so that
\[
	||g_0-\bar{g}||_{C^{2,\alpha}(M,\bar{g})} \leq C \varepsilon^{1-\alpha}.
\]
\end{thm}

The key point is that the constants $\varepsilon_0$ and $C$ neither depend on an upper bound for the diameter or volume, nor on a lower bound for the injectivity radius.

The analogue of \Cref{Main Theorem - introduction} for $n=3$ follows from \cite[Theorem 2]{HJ22}. It follows from \cite[Proposition 6.1]{HJ22} that the analogue of \Cref{Main Theorem - introduction} for $n=3$ is wrong if assumtpion (iii) is weakened to a pinching condition $\sec(M,\bar{g}) \in (-1-\varepsilon,-1+\varepsilon)$. However, the construction of these counterexamples exploits the fact that $n=3$. So it might be possible to weaken the assumption (iii) in \Cref{Main Theorem - introduction} for $n \geq 4$.

We have the following immediate consequence of \Cref{Main Theorem - introduction}, which states that if in dimension \(n \geq 4\) a negatively curved almost-Einstein metric is already hyperbolic except in a region of bounded geometry, then it is close to an Einstein metric.

\begin{cor}\label{cor: bounded negative curvature}
For all \(n \geq 4\), \(\alpha \in (0,1)\), \(K>0\), \(\Lambda \geq 0\), \(\iota > 0\) and \(v >0\) there exist \(\varepsilon_1=\varepsilon_1(n,\alpha,K,\Lambda, \iota, v)>0\) and \(C=C(n,\alpha,K,\Lambda,\iota,v)\) with the following property. Let \(M\) be a closed \(n\)-manifold, and let \(\bar{g}\) be a Riemannian metric on \(M\) satisfying
\[
	||{\rm Ric}(\bar{g})+(n-1)\bar{g}||_{C^0(M,\bar{g})} \leq \varepsilon,
	\quad
	\sec(M,\bar{g}) \leq -K
	\quad \text{and} \quad
	 || \nabla {\rm Ric}(\bar{g})||_{C^0(M)} \leq \Lambda
\]
for some \(\varepsilon \leq \varepsilon_1\). Assume that there is \(\Omega \subseteq M\) so that
\[
	{\rm inj}(\Omega) \geq \iota, \quad {\rm vol}(\Omega) \leq v \quad \text{and} \quad \sec(M,\bar{g})=-1 \, \text{ outside } \, \Omega.
\]
Then there exists an Einstein metric \(g_0\) on \(M\) with \({\rm Ric}(g_0)=-(n-1)g_0\) satisyfing
\[
	||g_0-\bar{g}||_{C^{2,\alpha}(M,\bar{g})} \leq C \varepsilon^{1-\alpha}.
\]
\end{cor}

In particular, in dimension \(n \geq 4\), if a $(1+\varepsilon)$-pinched negatively curved metric is already hyperbolic except in a region of bounded geometry, then it is close to a negatively curved Einstein metric. The analogous result for $n=3$ follows from \cite[Theorem 2]{HJ22}.

\begin{cor}\label{cor: almost hyperbolic}
For all \(n \geq 4\), \(\alpha \in (0,1)\), \(\Lambda \geq 0\), \(\iota > 0\) and \(v >0\) there exist positive constants \(\varepsilon_1=\varepsilon_1(n,\alpha,\Lambda, \iota, v)>0\) and \(C=C(n,\alpha,\Lambda,\iota,v) > 0\) with the following property. Let \(M\) be a closed \(n\)-manifold, and let \(\bar{g}\) be a Riemannian metric on \(M\) satisfying
\[
	|\sec(M,\bar{g})+1| \leq \varepsilon \quad \text{and} \quad || \nabla {\rm Ric}(\bar{g})||_{C^0(M)} \leq \Lambda
\]
for some \(\varepsilon \leq \varepsilon_1\). Assume that there is \(\Omega \subseteq M\) such that
\[
	{\rm inj}(\Omega) \geq \iota, \quad {\rm vol}(\Omega) \leq v \quad \text{and} \quad \sec(M,\bar{g})=-1 \, \text{ outside } \, \Omega.
\]
Then there exists an Einstein metric \(g_0\) on \(M\) with \({\rm Ric}(g_0)=-(n-1)g_0\) satisyfing
\[
	||g_0-\bar{g}||_{C^{2,\alpha}(M,\bar{g})} \leq C \varepsilon^{1-\alpha}.
\]
\end{cor}

Previously known results in this direction have to assume that the entire manifold has bounded geometry, though they do \textit{not} assume a bound on $\nabla \Ric$ (see for example \cite[Proposition 3.4]{And90}, \cite[Corollary 1.6]{PW97}, \cite[Theorems 11.4.16 and 11.4.17]{Pet16}). These results are proved by an argument by contradiction, building on suitable convergence theories. In contrast, \Cref{cor: bounded negative curvature} is proved directly. Thus, compared to previously known results, \Cref{cor: bounded negative curvature} (and its proof) is better in some aspects but worse in others.

\subsection{Strategy}

Given a background metric $\bar{g}$, we consider the Einstein operator $\Phi_{\bar{g}}$. The zeros of $\Phi_{\bar{g}}$ are Einstein metrics, and when $\bar{g}$ is almost-Einstein (in a suitable sense), then $\Phi_{\bar{g}}(\bar{g})$ is almost zero (in a suitable Banach space). So one might hope to find Einstein metrics close to $\bar{g}$ by an application of the inverse function theorem.

If $\bar{g}$ is a negatively curved almost-Einstein metric, then the linearized Einstein operator $\mathcal{L} \coloneq (D\Phi_{\bar{g}})_{\bar{g}}$ has an $L^2$-spectral gap by the classic work of Koiso \cite{Koi78}. Moreover, using Moser iteration one can obtain $C^0$-estimates for $\mathcal{L}$ from $L^2$-estimates. However, a priori one can apply Moser iteration only in regions of the manifold whose injectivity radius is uniformly bounded from below.

To deal with the regions of small injectivity radius, we apply Moser iteration to the lifted operator in the universal cover $\tilde{M}$ of $M$. The local $L^2$-norms in $\tilde{M}$ and $M$ are related by
\[
	\int_{B(\tilde{x},1)}|\tilde{h}|^2(\tilde{y}) \, d{\rm vol}(\tilde{y})
	\leq \#\left(B(\tilde{x},1) \cap \pi^{-1}(x)\right)
	\int_{B(x,1)}|h|^2(y) \, d{\rm vol}(y),
\]
where $\pi \colon \tilde{M} \to M$ is the universal covering projection. The key ingredient is a new local preimage counting result that bounds the number of local preimages $\#\left(B(\tilde{x},1) \cap \pi^{-1}(x)\right)$ in terms of $d(x,M_{\rm thick})$ (see \Cref{Preimage counting - prop}). 

We then also exploit the fact that $L^2$-spectral gaps imply weighted $L^2$-estimates with exponentially decaying weights whose decay rate depends on the spectral gap. However, in dimensions $n \geq 13$, Koiso's $L^2$-spectral gap is too weak in order for these exponentially decaying weights to absorb the factor $\#\left(B(\tilde{x},1) \cap \pi^{-1}(x)\right)$. We circumvent this problem by improving Koiso's estimate for infinite hyperbolic tubes. Indeed, we achieve this by a simple application of a celebrated result of Sullivan \cite{Sul87} about the first eigenvalue of the Laplacian (acting on functions) on hyperbolic manifolds.

\subsection{Structure of the article}

This article is organized as follows. In \Cref{sec: Preliminaries} we review the necessary preliminaries. Namely, in \Cref{subsec: Einstein operator} we introduce the Einstein operator, while \Cref{subsec: C^0 estimate} and \Cref{subsec: L2-estimate} contain the basic $C^0$- and $L^2$-estimates for its linearization, and \Cref{subsec: cut-off} presents the construction of well-behaved cut-off functions. In \Cref{sec: Preimage counting} we prove the geometric local preimage counting result, which is the key ingredient in the proof of \Cref{Main Theorem - introduction}. This is then used in \Cref{sec: Invertibility of L} to show that the linearized Einstein operator is invertible with respect to suitable Banach norms. Finally, the proofs of the main results are presented in \Cref{sec: Proof of main results}.

\bigskip

\noindent
\textbf{Acknowledgements:} I thank Hans-Joachim Hein for helpful conversations, John M.~Lee for a helpful email exchange, and Ursula Hamenst\"adt and Tristan Ozuch-Meersseman for useful comments regarding an earlier version of this article.

\section{Preliminaries}\label{sec: Preliminaries}

\subsection{The Einstein operator}\label{subsec: Einstein operator}
As mentioned in the introduction, we shall construct the Einstein metric by an application of the inverse function theorem for the so-called \emph{Einstein operator} (see \cite[Section I.1.C]{Biq00}, \cite[page 228]{And06} for more information).
This operator is defined as follows. 

Consider the operator $g \mapsto \Ric(g)+(n-1)g$ acting on smooth Riemannian metrics $g$ on a manifold $M$. This operator is ${\rm Diff}(M)$-equivariant, and thus its linearization is \textit{not} elliptic. To resolve this problem, for a given background metric \(\bar{g}\) one defines the \textit{Einstein operator} $\Phi_{\bar{g}}$ (in Bianchi gauge relative to $\bar{g}$) by 
\begin{equation*}\label{eq: Def of Phi}
	\Phi_{\bar{g}}(g):=\Ric(g)+(n-1)g+\frac{1}{2}\mathcal{L}_{(\beta_{\bar{g}}(g))^\sharp}(g),
\end{equation*}
where the musical isomorphism 
\(\sharp\) is with respect to the metric \(g\), and $\beta_{\bar{g}}$ is the \textit{Bianchi operator} of $\bar{g}$ acting on \((0,2)\)-tensors \(h\) by
\begin{equation*}\label{bianchi}
	\beta_{\bar{g}}(h):=\delta_{\bar{g}}(h)+\frac{1}{2}d{\rm tr}_{\bar{g}}(h):=-\sum_{i=1}^n(\nabla_{e_i}h)(\cdot, e_i)+\frac{1}{2}d{\rm tr}_{\bar{g}}(h).
\end{equation*}
Invoking the formula for the linearization of \(\Ric\) (\cite[Proposition 2.3.7]{Top06}) shows that the linearization of \(\Phi_{\bar{g}}\) at \(\bar{g}\) is given by
\begin{equation}\label{eq: linearisation of Einstein}
	\mathcal{L}h \coloneq (D\Phi_{\bar{g}})_{\bar{g}}(h)=\frac{1}{2}\Delta_L h +(n-1)h.
\end{equation}
Here $\Delta_L$ is the \textit{Lichnerowicz Laplacian} acting on $(0,2)$-tensors $h$ by
\[
    \Delta_Lh=\nabla^\ast \nabla h + \Ric(h),
\]
where $\nabla^\ast \nabla$ is the Connection Laplacian and $\Ric$ is the \textit{Weitzenböck curvature operator} given by
\(
	\Ric(h)(x,y)=h(\Ric(x),y)+h(x,\Ric(y))-2 \,{\rm tr}\, h(\cdot, R(\cdot,x)y)
\)
(see \cite[Section 9.3.2]{Pet16}). \Cref{eq: linearisation of Einstein} shows that \((D\Phi_{\bar{g}})_{\bar{g}}\) is an elliptic operator. This opens up the possibility for an application of the inverse function theorem.

The main point is that the Einstein operator can detect Einstein metrics. The following result can for example be found in \cite[Lemma 2.1]{And06}.

\begin{lem}[Detecting Einstein metrics]\label{Zeros of Phi are Einstein}
Let \((M,\bar{g})\) be a complete Riemannian manifold, and let \(g\) be another 	metric on \(M\) so that 
\[
	\sup_{x \in M}|\beta_{\bar{g}}(g)|(x)<\infty  \quad \text{and} \quad \Ric(g) \leq \lambda g   \,\text{ for some } \lambda < 0,
\]
 where \(\beta_{\bar{g}}(\cdot)\) is the Bianchi operator of the background metric \(\bar{g}\). Denote by \(\Phi_{\bar{g}}\) the Einstein operator defined in (\ref{eq: Def of Phi}). Then
\begin{equation*}
 \Phi_{\bar{g}}(g)=0 \quad \text{if and only if } \quad g  \text{ solves the system} \quad \begin{cases} \Ric(g)=-(n-1)g \\
	\beta_{\bar{g}}(g)=0
	\end{cases}.
\end{equation*}
\end{lem}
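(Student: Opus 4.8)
The plan is to prove the two implications separately. The direction ($\Leftarrow$) is trivial: if $\Ric(g) = -(n-1)g$ then $\Ric(g) + (n-1)g = 0$, and if additionally $\beta_{\bar g}(g) = 0$ then $(\beta_{\bar g}(g))^\sharp = 0$, so the Lie derivative term vanishes as well, giving $\Phi_{\bar g}(g) = 0$. The substance is in the forward direction ($\Rightarrow$).

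For ($\Rightarrow$), assume $\Phi_{\bar g}(g) = 0$, i.e.\ $\Ric(g) + (n-1)g = -\tfrac12 \mathcal{L}_{X}(g)$ where $X := (\beta_{\bar g}(g))^\sharp$. The strategy is to show that $\beta_{\bar g}(g) = 0$; once we know this, the Lie derivative term disappears and we recover $\Ric(g) = -(n-1)g$ immediately. To control $\beta_{\bar g}(g)$, I would apply the contracted second Bianchi identity for $g$ (i.e.\ $\beta_g(\Ric(g)) = 0$, equivalently $\delta_g \Ric(g) + \tfrac12 d\,\mathrm{tr}_g \Ric(g) = 0$) to the equation $\Phi_{\bar g}(g) = 0$. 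This is the classical computation underlying the DeTurck trick: taking the Bianchi operator $\beta_g$ of both sides of the identity $\Ric(g) + (n-1)g + \tfrac12 \mathcal{L}_X g = 0$ and using that $\beta_g(\Ric(g) + (n-1)g) = 0$ (since $\beta_g g = 0$), one obtains $\beta_g(\mathcal{L}_X g) = 0$. Now $\mathcal{L}_X g = \mathcal{L}_{X^\flat}$ viewed appropriately, and there is a standard Weitzenböck-type identity expressing $\beta_g(\mathcal{L}_{\omega^\sharp} g)$ in terms of a Laplacian on the one-form $\omega = \beta_{\bar g}(g)$: concretely, $\delta_g \mathcal{L}_{X} g = \nabla^*\nabla \omega - \Ric(g)(X, \cdot) + \ldots$, so that the equation becomes an elliptic equation of Bochner type for $\omega$, schematically
\[
	\nabla^*\nabla \omega + \Ric(g)(\omega^\sharp, \cdot) = 0
\]
(up to the precise form of the curvature term, which I would extract from the standard reference).

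The key point is then a vanishing argument: pair this elliptic equation with $\omega$ itself. If $M$ were closed one would integrate by parts to get $\int_M |\nabla \omega|^2 + \Ric(g)(\omega^\sharp, \omega^\sharp) = 0$, and since $\Ric(g) \le \lambda g < 0$ the second term is $\le \lambda |\omega|^2 \le 0$ with the wrong—no, the right—sign forcing $\omega \equiv 0$. Since $M$ is merely complete, one instead uses the hypothesis $\sup_M |\beta_{\bar g}(g)| = \sup_M |\omega| < \infty$ together with a maximum-principle / Omori--Yau argument, or a cutoff integration by parts. Specifically, apply the Bochner formula $\tfrac12 \Delta |\omega|^2 = |\nabla \omega|^2 - \langle \nabla^*\nabla \omega, \omega\rangle = |\nabla\omega|^2 + \Ric(g)(\omega^\sharp,\omega^\sharp) \le |\nabla\omega|^2 + \lambda |\omega|^2$; rearranged this gives $\Delta |\omega|^2 \ge 2\lambda |\omega|^2 \cdot(\text{sign bookkeeping})$, i.e.\ $|\omega|^2$ is a bounded subsolution of an equation with a good zeroth-order term, and the generalized maximum principle on a complete manifold forces $|\omega|^2 \equiv 0$. (Equivalently: $\Delta|\omega|^2 \ge c|\omega|^2$ for $c>0$ with $|\omega|^2$ bounded above implies $|\omega|^2 \le 0$.)

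The main obstacle I anticipate is twofold: first, getting the Weitzenböck identity for $\beta_g(\mathcal{L}_{\omega^\sharp}g)$ with the correct curvature term and the correct sign — this requires care because $\mathcal{L}_{\omega^\sharp}g$ mixes $\omega$ and the metric, and one must commute $\beta_g$ with the Lie derivative; and second, making the completeness (rather than compactness) rigorous, where the boundedness hypothesis $\sup_M|\beta_{\bar g}(g)| < \infty$ is exactly what is needed to run either the Omori--Yau maximum principle or a Karp-type cutoff argument. I would handle the first by citing the standard formula (e.g.\ from the literature on the DeTurck trick or the Biquard reference already cited) and the second by invoking a version of the maximum principle valid for bounded subsolutions on complete manifolds with a strictly positive zeroth-order coefficient, which holds without any curvature or volume assumption precisely because that coefficient is bounded away from zero.
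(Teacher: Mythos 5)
The paper does not prove this lemma itself --- it cites \cite[Lemma 2.1]{And06} --- and your outline is exactly that standard argument: apply $\beta_g$ to $\Phi_{\bar g}(g)=0$, use $\beta_g(\Ric(g))=0$ and $\beta_g(g)=0$ to reduce to $\beta_g(\mathcal{L}_{\omega^\sharp}g)=\nabla^*\nabla\omega-\Ric(g)(\omega^\sharp,\cdot)=0$ for $\omega=\beta_{\bar g}(g)$, and then kill $\omega$ via Bochner and a maximum principle using $\Ric(g)\le\lambda g<0$ together with $\sup_M|\omega|<\infty$. Only fix the sign of the curvature term (it enters with a minus, so the Bochner formula yields $\Delta|\omega|^2\ge 2|\lambda|\,|\omega|^2\ge 0$, which is the favorable direction for the vanishing argument).
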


\subsection{$C^0$-estimate}\label{subsec: C^0 estimate}

To obtain $C^0$-estimates for the linearization of the Einstein operator, we use the  
De Giorgi--Nash--Moser estimates in the following form. In its formulation, $\Sym^2(T^*M)$ denotes the bundle of symmetric $(0,2)$-tensors on $M$.

\begin{lem}[$C^0$-estimate]\label{lem: Nash-Moser} 
For all $n \in \bbN$, \(\alpha \in (0,1)\), \(\Lambda \geq 0\), and \(\iota>0\) there exist constants $\rho=\rho(n,\alpha,\Lambda,\iota) > 0$ and $C=C(n,\alpha,\Lambda,\iota)>0$ with the following property.  Let \(M\) be a Riemannian \(n\)-manifold satisfying 
\[
		|\sec(M)| \leq \Lambda  \quad \text{and} \quad \inj(M) \geq \iota.
\] 
Let \(f \in C^0\big(\Sym^2(T^*M)\big)\) be arbitrary, and assume \(h \in C^2\big({\rm  Sym}^2(T^*M)\big)\) is a solution of
\[
	\frac{1}{2}\Delta_L h+(n-1)h=f.
\]
Then it holds
\begin{equation*}\label{eq: Nash-Moser} 
	|h|(x) \leq C\Big( ||h||_{L^2(B(x,\rho))}+||f||_{C^0(B(x,\rho))}\Big)
\end{equation*}
for all \(x \in M\).
\end{lem}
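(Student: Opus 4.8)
The plan is to reduce the tensor equation to a scalar differential inequality and then invoke the classical interior $L^2\!\to\!L^\infty$ bound of De Giorgi--Nash--Moser for scalar subsolutions. First I would compute the Bochner-type formula for $|h|^2$. Writing the equation as $\nabla^*\nabla h = 2f - \operatorname{Ric}(h) - 2(n-1)h$ and using Kato's inequality together with $\Delta |h|^2 = 2\langle h, -\nabla^*\nabla h\rangle + 2|\nabla h|^2$, one gets a differential inequality of the form
\begin{equation*}
	\Delta |h| \geq -c_1(n,\Lambda)\,|h| - c_2\,|f|
\end{equation*}
in the barrier (or distributional) sense, where the constants $c_1,c_2$ depend only on $n$ and the curvature bound $\Lambda$ (the Weitzenböck curvature operator $\operatorname{Ric}(h)$ is controlled pointwise by $\Lambda |h|$, and the zeroth-order term $(n-1)h$ is harmless). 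Thus $u:=|h|$ is a nonnegative subsolution (after flipping signs) of a uniformly elliptic scalar equation with bounded coefficients and bounded inhomogeneity $|f|$.

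Next I would fix the scale. Using $|\sec(M)|\le\Lambda$ and $\operatorname{inj}(M)\ge\iota$, choose $\rho=\rho(n,\Lambda,\iota)>0$ small enough that every metric ball $B(x,\rho)$ is a normal ball on which the metric is uniformly comparable to the Euclidean metric with uniformly bounded Christoffel symbols; on such a ball the operator $\Delta$ written in coordinates is uniformly elliptic with ellipticity constants and coefficient bounds depending only on $n,\Lambda,\iota$. Then the standard local maximum principle / Moser iteration for subsolutions (e.g. \cite[Theorem 8.17]{GT01} applied in the chart on $B(x,\rho)$, or directly the Riemannian version) yields
\begin{equation*}
	\sup_{B(x,\rho/2)} u \;\leq\; C\Big( \|u\|_{L^2(B(x,\rho))} + \|\,|f|\,\|_{L^q(B(x,\rho))}\Big)
\end{equation*}
for $q>n/2$; taking $q=\infty$ and absorbing the volume of $B(x,\rho)$ (bounded above in terms of $n,\Lambda,\rho$ by volume comparison) into the constant gives exactly the claimed estimate with $C=C(n,\alpha,\Lambda,\iota)$. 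The parameter $\alpha$ plays no real role here — it is carried along only so the constant matches the hypotheses used elsewhere — so I would simply note that the constant may be taken to depend on $\alpha$ trivially.

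The main technical point, and the step I would be most careful about, is making the passage from the tensor equation to a genuine scalar subsolution rigorous at points where $h$ vanishes, since $|h|$ is only Lipschitz there: one handles this either by working with $u_\epsilon=\sqrt{|h|^2+\epsilon^2}$ and letting $\epsilon\to0$, or by the Kato inequality in the distributional sense, which is standard. A secondary point is to double-check that the uniform ellipticity constants of $\Delta$ in normal coordinates on $B(x,\rho)$, and the $C^0$ bounds on the lower-order coefficients coming from $\operatorname{Ric}(h)$, depend only on $(n,\Lambda,\iota)$ and not on any further derivative bounds on curvature — this is why the hypothesis is stated with $|\sec|\le\Lambda$ and $\operatorname{inj}\ge\iota$ rather than, say, a bound on $\nabla\operatorname{Ric}$. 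Everything else is routine elliptic theory.
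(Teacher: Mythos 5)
Your argument is essentially correct, but it takes a genuinely different route from the paper. The paper does not pass to the scalar quantity $|h|$ at all: it invokes the Jost--Karcher/Anderson result \cite{JK82}, \cite{And90} that under $|\sec|\leq\Lambda$ and $\inj\geq\iota$ every point admits a \emph{harmonic} chart of a priori radius $\rho(n,\alpha,\Lambda,\iota)$ in which the metric coefficients are controlled in $C^{1,\alpha}$, and then applies De Giorgi--Nash--Moser \cite[Theorem 8.17]{GT01} to the equation satisfied by the components of $h$ in that chart (see \cite{HJ24} for the details); this is also where the dependence of $\rho$ and $C$ on $\alpha$ comes from. Your route instead uses Kato's inequality to turn $u=|h|$ into a scalar subsolution of $\Delta u + c_1 u \geq -c_2|f|$ and applies the scalar local boundedness estimate. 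This is a legitimate alternative: the reduction to a subsolution (handled via $u_\epsilon=\sqrt{|h|^2+\epsilon^2}$ at the zero set, as you note) is standard, the Weitzenb\"ock term is indeed controlled pointwise by $C(n)\Lambda|h|$, and Theorem 8.17 of \cite{GT01} requires no sign condition on the zeroth-order coefficient, so the ``wrong-sign'' term $+c_1u$ is harmless for the purely local estimate. Your approach buys a proof that avoids harmonic coordinates entirely and only ever needs the divergence-form Laplace--Beltrami operator to have bounded measurable, uniformly elliptic coefficients; the paper's approach is the one you would need if you wanted to keep working with the components of the tensor $h$ itself (and it is also what feeds the Schauder estimates used later in the paper).

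One claim in your write-up is not justified and should be removed: under only $|\sec|\leq\Lambda$ and $\inj\geq\iota$ one can \emph{not} assert that geodesic normal coordinates on $B(x,\rho)$ have uniformly bounded Christoffel symbols --- controlling first derivatives of the metric in normal coordinates requires bounds on derivatives of the curvature, and the loss of regularity of the metric in normal coordinates (as opposed to harmonic ones) is precisely the reason the paper routes the argument through \cite{JK82} and \cite{And90}. Fortunately your argument does not actually need this: Rauch comparison gives two-sided $C^0$ bounds on $g_{ij}$ in normal coordinates on a ball of radius $\rho(n,\Lambda,\iota)$, hence uniform ellipticity and boundedness of the coefficients $\sqrt{g}\,g^{ij}$ of the divergence-form operator, and that is all De Giorgi--Nash--Moser asks for when applied to the scalar subsolution $u$. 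So the fix is simply to delete the Christoffel-symbol claim and phrase the application of \cite[Theorem 8.17]{GT01} entirely in divergence form.
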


The main ingredients for the proof are the classic De Giorgi--Nash--Moser estimates (see for example \cite[Theorem 8.17]{GT01}) and a result by Jost--Karcher \cite[Satz 5.1]{JK82} or Anderson \cite[Main Lemma 2.2]{And90} stating that, under the geometric assumptions, around every point there exists a harmonic chart of a priori size with good analytic control. We refer the reader to \cite[Proof of Lemma 2.2]{HJ24} for further details.

For the proof of \Cref{Main Theorem - introduction} we can \textit{not} directly apply \Cref{lem: Nash-Moser} because the latter assumes a positive lower bound for the injectivity radius. To remedy this problem, we will apply \Cref{lem: Nash-Moser} to the lifted equation \(\mathcal{L}\tilde{h}=\tilde{f}\) in the universal cover \(\tilde{M}\). But then one needs to relate the local \(L^2\)-norm \(||\tilde{h}||_{L^2(B(\tilde{x},1/2))}\) in the universal cover to the local \(L^2\)-norm \(||h||_{L^2(B(x,1/2))}\) in the manifold \(M\) itself. The following basic observation states that this is possible if one can count the number of local preimages. In its formulation, \(\pi:\tilde{M} \to M\) denotes the universal covering projection.

\begin{lem}\label{lem: local L^2-norms and preimages} Let \(x\in M\) and assume \(\omega:M \to \bbR\) is a continuous function satisfying
\[
	\# \left(\pi^{-1}(y) \cap B(\tilde{y},1) \right) \leq \omega(y)
\]
for all \(y \in B(x,1/2) \subseteq M\) and every lift \(\tilde{y} \in \tilde{M}\) of \(y\). Let \(u: M \to \bbR_{\geq 0}\) be a non-negative locally-integrable function and denote by \(\tilde{u}:=u \circ \pi\) its lift to the universal cover. Then
\[
	\int_{B(\tilde{x},1/2)} \tilde{u}(\tilde{y}) \, d{\rm vol}_{\tilde{g}}(\tilde{y}) \leq  \int_{B(x,1/2)} \omega(y) u(y) \, d{\rm vol}_{g}(y). 
\]
\end{lem}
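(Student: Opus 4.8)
The plan is to unfold the integral over $B(\tilde x, 1/2) \subseteq \tilde M$ by slicing it according to fibers of the covering map $\pi$, and then to compare with the corresponding integral downstairs on $M$ via the co-area/fiber-integration formula for the Riemannian covering $\pi: \tilde M \to M$. The key point is that $\pi$ is a local isometry, so it is volume-preserving in the sense that for any non-negative measurable function $\varphi$ on $\tilde M$ one has
\[
  \int_{\tilde M} \varphi \, d\mathrm{vol}_{\tilde g} = \int_M \Big( \sum_{\tilde y \in \pi^{-1}(y)} \varphi(\tilde y) \Big) d\mathrm{vol}_g(y).
\]
First I would apply this with $\varphi = \tilde u \cdot \mathbf{1}_{B(\tilde x, 1/2)}$, so that the left-hand side becomes exactly $\int_{B(\tilde x,1/2)} \tilde u \, d\mathrm{vol}_{\tilde g}$ and the inner sum on the right becomes $\sum_{\tilde y \in \pi^{-1}(y) \cap B(\tilde x,1/2)} u(y)$, using that $\tilde u = u \circ \pi$ is constant equal to $u(y)$ along the fiber over $y$.

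Next I would estimate this inner sum. For $y \notin \pi(B(\tilde x,1/2)) \supseteq$ (a set contained in) $B(x,1/2)$ the sum is empty, and in fact $\pi(B(\tilde x, 1/2)) \subseteq B(x,1/2)$ since $\pi$ is distance-non-increasing and $\pi(\tilde x) = x$; so only $y \in B(x,1/2)$ contribute. For such $y$, the crucial geometric observation is that every $\tilde y \in \pi^{-1}(y) \cap B(\tilde x, 1/2)$ satisfies $B(\tilde x, 1/2) \subseteq B(\tilde y, 1)$ by the triangle inequality (since $d(\tilde x, \tilde y) < 1/2$), hence $\tilde x \in B(\tilde y,1)$ and more to the point each such $\tilde y$ lies in $B(\tilde x,1/2) \subseteq B(\tilde y, 1)$; therefore
\[
  \#\big(\pi^{-1}(y) \cap B(\tilde x, 1/2)\big) \leq \#\big(\pi^{-1}(y) \cap B(\tilde y, 1)\big) \leq \omega(y),
\]
where the last inequality is the hypothesis, valid for every lift $\tilde y$ of $y$. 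Hence $\sum_{\tilde y \in \pi^{-1}(y) \cap B(\tilde x,1/2)} u(y) = u(y)\cdot \#\big(\pi^{-1}(y)\cap B(\tilde x,1/2)\big) \leq \omega(y) u(y)$ for all $y \in B(x,1/2)$, using $u \geq 0$. Substituting this bound into the right-hand side of the fiber-integration identity yields
\[
  \int_{B(\tilde x,1/2)} \tilde u \, d\mathrm{vol}_{\tilde g} \leq \int_{B(x,1/2)} \omega(y) u(y)\, d\mathrm{vol}_g(y),
\]
which is the claim.

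The only genuinely non-routine point is the containment $\pi^{-1}(y) \cap B(\tilde x, 1/2) \subseteq \{\tilde y : \tilde y \in B(\tilde y',1) \text{ for the relevant lift}\}$ — i.e. making sure the hypothesis on $\omega$, which is phrased in terms of balls $B(\tilde y, 1)$ centered at the lifts themselves, actually bounds the count of lifts falling into the fixed ball $B(\tilde x, 1/2)$; this is handled by the elementary observation that if $\tilde y \in B(\tilde x, 1/2)$ then trivially $\tilde y \in B(\tilde y, 1)$ and every other lift $\tilde y''$ in $B(\tilde x, 1/2)$ also lies in $B(\tilde y, 1)$ since $d(\tilde y, \tilde y'') \le d(\tilde y,\tilde x)+d(\tilde x,\tilde y'') < 1$, so $\pi^{-1}(y) \cap B(\tilde x,1/2) \subseteq \pi^{-1}(y) \cap B(\tilde y,1)$. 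Everything else (fiber integration for a Riemannian covering, constancy of $\tilde u$ on fibers, distance-non-increasing property of $\pi$) is standard, so I would state these briefly and not belabor them.
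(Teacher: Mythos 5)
Your proof is correct and rests on the same key observation as the paper's: if $\tilde{y} \in B(\tilde{x},1/2)$ then $B(\tilde{x},1/2) \subseteq B(\tilde{y},1)$ by the triangle inequality, so each $y \in B(x,1/2)$ has at most $\omega(y)$ preimages in $B(\tilde{x},1/2)$. The only difference is the routine measure-theoretic packaging: you invoke the fiber-integration (unfolding) identity for the covering and bound the fiber sums pointwise, whereas the paper verifies the inequality for indicator functions and extends to general $u$ by simple-function approximation.
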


\begin{proof}By the triangle inequality, if $\tilde y\in B(\tilde x,1/2)$ then 
$ B(\tilde x,1/2) \subseteq B(\tilde y,1)$. Thus by assumption, 
a point $y\in B(x,1/2)$ has at most $\omega(y)$ preimages in $B(\tilde x,1/2)$. 
Hence the claim holds true for the indicator function \(u=\chi_U\) of a small open 
subset \(U \subseteq B(x,1/2)\). 
By linearity and monotonicity the result follows for all non-negative simple functions. A standard approximation argument completes the proof.
\end{proof}

\subsection{\(L^2\)-estimates}\label{subsec: L2-estimate}


We start by recalling the following classic $L^2$-spectral gap estimate for the linearized Einstein operator (see (\ref{eq: linearisation of Einstein}))
\[
	\mathcal{L}= \frac{1}{2}\Delta_L+(n-1){\rm id}
\] 
of an (almost) Einstein metric with negative sectional curvature due to Koiso \cite[Section 3]{Koi78}. 
Originally, Koiso only formulated the estimate for genuine Einstein metrics. However, the following slightly generalized result easily follows from the proof presented in \cite[Lemma 12.71]{Bes08}.

\begin{lem}[Koiso]\label{lem: Koiso}
Let $(M,\bar{g})$ be a compact Riemannian $n$-manifold satisfying
\[
	||\Ric(\bar{g})+(n-1)\bar{g}||_{C^0(M,\bar{g})} \leq \delta 
	\quad \text{and} \quad
	\sec(M,\bar{g}) \leq - K < 0.
\]
Then, for all $h \in C^2\big(\Sym^2(T^* M)\big)$, we have
\[
	\big((n-2)K-C\delta \big) \int_M |h|^2 \, d\vol \leq 2\int_{M} \langle \mathcal{L}h,h\rangle \, d\vol
\]
for some $C=C(n)>0$.
\end{lem}

It will be important that this estimate can be dramatically improved for infinite hyperbolic tubes. Here, by an infinite hyperbolic tube $T$ we mean the quotient $T=\bbH^n/ \langle \varphi \rangle$ of hyperbolic space by the infinite cyclic group generated by a hyperbolic isometry $\varphi$, i.e., an isometry that acts via non-trivial translation along a geodesic $\gamma \subseteq \bbH^n$.

\begin{lem}[Hyperbolic tube]\label{lem: spectral gap in hyperbolic tube}
Let $T$ be an infinite hyperbolic tube. Then
\[
	\left(\frac{(n-1)^2}{4}-2 \right) \int_T |h|^2 \, d\vol \leq 2\int_T \langle \mathcal{L}h, h \rangle \, d\vol
\]
for all compactly supported symmetric $(0,2)$-tensors $h \in C_c^2\big(\Sym^2(T^* T)\big)$.
\end{lem}

\begin{proof}
This will easily follow from a celebrated result of Sullivan \cite{Sul87} about the spectral gap of the Laplace operator acting on functions on hyperbolic manifolds.

Note that the limit set of an infinite hyperbolic tube $T=\bbH^n/ \langle \varphi \rangle$, i.e., the accumulation points of an orbit of the action $\pi_1(T) \curvearrowright \bbH^n$ at the boundary at infinity $\partial_\infty \bbH^n$, consists of two points (the attracting and repelling fixed points of the hyperbolic isometry $\varphi$). Consequently, the critical exponent $\delta$ of $T$, i.e., the Hausdorff dimension of its limit set, is $\delta=0$. Therefore, it follows from \cite[Theorem 2.17]{Sul87} that the spectral gap of the Laplacian acting on functions on $T$ is $(n-1)^2/4$, i.e., for all $u \in W^{1,2}(T)$ we have
\begin{equation*}
	\frac{(n-1)^2}{4}\int_T u^2 \, d\vol \leq \int_T |\nabla u|^2 \, d\vol.
\end{equation*}
Applying this with $u=|h|$ implies
\begin{equation*}
	\frac{(n-1)^2}{4}\int_T |h|^2 \, d\vol \leq \int_T |\nabla h|^2 \, d\vol.
\end{equation*}
for all $h \in  C_c^2\big(\Sym^2(T^* T)\big)$ (see for example \cite[Lemma 7.7]{Lee06} for details).

Since $\sec(T) \equiv -1$, we have 
\(
	\frac{1}{2}\Ric(h)=-(nh-\tr(h)g_{\rm hyp}),
\)
where ${\rm Ric}$ denotes the Weitzenb\"ock curvature operator (see \cite[Section 9.3.2]{Pet16}). Thus
\[
	2 \langle \mathcal{L}h , h \rangle
	=\langle \nabla^\ast \nabla h+ \Ric(h)+2(n-1)h , h \rangle
	= \langle \nabla^\ast \nabla h, h \rangle -2|h|^2 + 2|{\rm tr}(h)|^2
\]
for all $h \in C^2\big(\Sym^2(T^* T)\big)$. Combined with the above integral estimate, this completes the proof.
\end{proof}

Finally, we recall that a spectral gap implies weighted $L^2$-bounds due to the following identity.

\begin{rem}[Weighted $L^2$-bound]\label{rem: weighted L^2}
Let $M$ be a complete Riemannian $n$-manifold. Then
\[
	2\int_M \langle \mathcal{L}(\varphi h),\varphi h \rangle \, d\vol = 
	2\int_M  \varphi^2\langle \mathcal{L}(h), h \rangle \, d\vol + \int_M |\nabla \varphi|^2 |h|^2 \, d\vol
\]
for all $h \in C_c^2\big(\Sym^2(T^* M)\big)$ and $\varphi \in C^2(M)$.
\end{rem}

Therefore, if $2\mathcal{L}$ has spectral gap $\lambda \geq 0$, then we obtain weighted $L^2$-bounds with weight $\varphi^2=e^{2\rho}$ as long as $\rho \in C^2(M)$ satisfies ${\rm Lip}(\rho) < \sqrt{\lambda}$.

\begin{proof}
Observe
\[
	\nabla^\ast \nabla (\varphi h)=(\nabla^\ast \nabla \varphi)h - 2\tr(d\varphi \otimes \nabla h)+\varphi \nabla^\ast \nabla h.
\]
Consider the vector field $V=\varphi |h|^2 \nabla \varphi$. A computation in a local orthonormal frame shows
\[
	{\rm div}(V)=|\nabla \varphi|^2 |h|^2 + 2\varphi\langle \tr(d\varphi \otimes \nabla h), h\rangle - \varphi (\nabla^\ast \nabla \varphi)|h|^2.
\]
As $\int_M {\rm div}(V) \, d\vol = 0$ by the divergence theorem, the desired identity easily follows.
\end{proof}

\subsection{Cutting off the thin part}\label{subsec: cut-off}

Since we will only consider almost Einstein metrics with negative sectional curvature, their sectional curvatures satisfy $\sec \in [-n,0)$. Thus, once and for all, we fix a Margulis constant $\mu_n$ for all manifolds $M$ with $\sec(M) \in [-n,0)$. 
If such a manifold $M$ is compact, then its \textit{thin part} $M_{\rm thin}=\{x \in M \, | \, \inj(x) \leq \mu_n\}$ is a finite union of so-called \textit{Margulis tubes}, which are tubular neighbourhoods of closed geodesics of length at most $2\mu_n$ (see \cite[Section 10.3]{BGS85} or \cite[page 3]{BCD93} for more information). 

In general, however, the boundary $\partial T$ of a Margulis tube $T$ is \textit{not} smooth. Nonetheless, Buser--Colbois--Dodziuk \cite[Theorem 2.14]{BCD93} showed that $\partial T$ is uniformly close to a smooth hypersurface $H$ whose sectional curvatures are uniformly bounded. We now explain how their arguments yield the existence of a well-behaved cut-off function.

\begin{lem}\label{lem: cut-off fct}
Let $K > 0$, $M$ be a closed Riemannian $n$-manifold with $\sec(M) \in [-n,-K]$, and $T \subseteq M_{\rm thin}$ a Margulis tube. Then there exists a smooth cut-off function $\eta \colon M \to [0,1]$ with the following properties:
\begin{enumerate}[(i)]
	\item $\eta=1$ in $\left\{x \in T \, | \, \inj(x) \leq \frac{\mu_n}{4}\right\}$ and $\eta=0$ in $M \setminus T$;
	\item $||\eta||_{C^2(M)} \leq C(n,K)$.
\end{enumerate}
\end{lem}

Using this cut-off fucntion, for any $h \in C^2\big(\Sym^2(T^* M)\big)$, we can think of $\eta h$ as a compactly supported $(0,2)$-tensor on an infinite tube; thus we will be able to apply \Cref{lem: spectral gap in hyperbolic tube} to $\eta h$ given that $\sec=-1$ in $M_{\rm thin}$.

\begin{proof}
Consider the inner Margulis tube 
\[
	T^\prime \coloneq \left\{x \in T \, | \, \inj(x) \leq \frac{\mu_n}{2}\right\}.
\] 
Applying \cite[Theorem 2.14]{BCD93} to $T^\prime$ yields a smooth hypersurface $H \subseteq T^\prime$ such that, for every $x \in H$, the intersection $\bar{x} \in \partial T^\prime$ of $\partial T^\prime$ with the radial geodesic from the core geodesic through $x$ satisfies $d(x,\bar{x}) \leq \frac{\mu_n}{50}$. Then we have $\inj \geq \frac{24}{50}\mu_n$ on $H$ because $\inj$ is $1$-Lipschitz, and $\inj \leq \frac{\mu_n}{2}$ on $H$ since $H \subseteq T^\prime$.

In fact, by \cite[(2.19) and page 12]{BCD93}, the smooth hypersurface $H$ approximating $\partial T^\prime$ is of the form $H=\{x \in T^\prime \, | \, g(x)=0\}$ for some smooth function $g \colon T \to \bbR$ satisfying
\[
	 \frac{1}{C_1} \leq ||\nabla g|| \leq C_1 
	\quad \text{and} \quad
	||g||_{C^2(M)} \leq C_2
\]
for some constants $C_i=C_i(n,K)$ ($i=1,2$). Consider the vector field $V \coloneq \frac{\nabla g}{|\nabla g|^2}$ on $T$, and denote by $\Phi_t$ its flow. Keeping in mind that $|V| \leq C_1$ and that $\inj$ is $1$-Lipschitz, we note 
\[
	\Phi_t(H) \subseteq \left\{x \in T \, \left| \, \frac{\mu_n}{4} \leq \inj(x) \leq \frac{3\mu_n}{4}\right\}\right.
	\quad \text{for all } |t| \leq \tau \coloneq \frac{\mu_n}{8C_1}.
\] 
Moreover, $g=t$ on $\Phi_t(H)$.

Fix a smooth bump-function $\psi \colon \bbR \to [0,1]$ such that $\psi=1$ on $(-\infty,0]$, $\psi=0$ on $[\tau,\infty)$, and $||\psi||_{C^2(\bbR)} \leq C(\tau)$, where $\tau$ is as in the equation displayed above. After possibly changing the sign of $g$, we may assume without loss of generality that $g < 0$ in the component of $T \setminus H$ containing the core geodesic, and $g > 0$ in the component of $T \setminus H$ containing $\partial T$. Then
\[
	\eta \colon M \to [0,1], \, x \mapsto 
	\begin{cases}
	\psi(g(x)) & \text{if } x \in T \\
	0 & \text{otherwise}
	\end{cases}
\]
has the desired properties.
\end{proof}

\section{Counting preimages}\label{sec: Preimage counting}

In view of \Cref{lem: local L^2-norms and preimages}, in order to apply the De Giorgi--Nash--Moser estimate, we need to bound the number of local preimages in the universal cover. This is contained in the following preimage counting result. 

\begin{prop}\label{Preimage counting - prop}
Let \(M\) be closed Riemannian \(n\)-manifold such that \(\sec(M) \in [-n,0)\). Assume in addition that $\sec=-1$ in $M_{\rm thin}$. Then there is a universal constant \(C=C(n)\) such that for all \(x \in M\) and every lift \(\tilde{x} \in \tilde{M}\) of \(x\) it holds
\begin{equation}\label{eq: Preimage counting}
	\#\left( B(\tilde{x},1) \cap \pi^{-1}(x) \right) \leq C \exp\left(\left\lfloor \frac{n+1}{2} \right\rfloor d(x,M_{\rm thick})\right),
\end{equation}
where \(\pi:\tilde{M} \to M\) is the universal covering projection, and \(B(\tilde{x},1)\) is the ball of radius one with center \(\tilde{x}\) in \(\tilde{M}\).
\end{prop}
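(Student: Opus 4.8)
Here is my plan for proving the preimage counting proposition.

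\medskip

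\textbf{Overall strategy.} The estimate \eqref{eq: Preimage counting} bounds the number of deck transformations $\gamma \in \pi_1(M)$ with $d(\tilde x, \gamma \tilde x) < 2$ (since lifts of $x$ in $B(\tilde x,1)$ are exactly the $\gamma\tilde x$ with $\gamma\tilde x \in B(\tilde x,1)$, and two such lifts $\gamma_1\tilde x, \gamma_2\tilde x$ give $d(\gamma_1\tilde x,\gamma_2\tilde x)<2$). The plan is to separate the short generators (those moving $\tilde x$ a definite amount less than the Margulis constant $\mu$) from the rest, using the Margulis lemma to control the former and a volume/packing argument for the latter. First I would observe that by the Margulis lemma, the subgroup $\Gamma_\mu(\tilde x) \subseteq \pi_1(M)$ generated by $\{\gamma : d(\tilde x,\gamma\tilde x) < \mu\}$ is virtually nilpotent, and since $M$ is negatively curved it is virtually cyclic; in fact $x$ lies in the $\mu$-thin part precisely when this group is nontrivial, and then (using $\sec = -1$ in $M_{\rm thin}$) the relevant component of the thin part is a standard hyperbolic tube or cusp neighborhood around a short geodesic (or a cusp), whose geometry is completely explicit.

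\medskip

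\textbf{Key steps.} (1) Reduce to counting $\gamma \in \pi_1(M)$ with $d(\tilde x,\gamma\tilde x)<2$. (2) If $x \in M_{\rm thick}$ (or more precisely if $d(x,M_{\rm thick})$ is bounded by a universal constant), the injectivity radius at $x$ is bounded below by a universal constant, so a volume comparison (Bishop--Gromov with $\sec \geq -4$) bounds the count by a universal constant $C(n)$; this is the base case making the right-hand side $\geq C$ enough. (3) For $x$ deep in the thin part, let $R := d(x,M_{\rm thick})$. The thin component containing $x$ is, by condition $\sec=-1$ there, a genuine hyperbolic Margulis tube (or rank-one cusp) $V = \tilde V / \langle \gamma_0\rangle$ where $\gamma_0$ is a hyperbolic (resp. parabolic) isometry of $\mathbb{H}^n$. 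The deck transformations moving $\tilde x$ less than $2$ that lie in $\langle \gamma_0\rangle$ are counted directly from the explicit hyperbolic metric: the number of powers $\gamma_0^k$ with $d(\tilde x,\gamma_0^k\tilde x)<2$ grows like $e^{d(x,\text{core geodesic})}$ in the tube case and like $e^{d(x,\partial V)}\sim e^{R}$ — this is where the exponential in $R$ enters, and the exponent is governed by how the displacement function of $\gamma_0^k$ behaves as one moves toward the boundary of the tube. (4) The remaining deck transformations with $d(\tilde x,\gamma\tilde x)<2$ but $\gamma \notin \Gamma_\mu(\tilde x)$: any two of these, composed appropriately, either land back in $\Gamma_\mu$ or move $\tilde x$ by at least $\mu$; a standard packing argument (balls of radius $\mu/2$ around the $\gamma\tilde x$ are disjoint inside $B(\tilde x, 2+\mu/2)$, volume comparison with $\sec\in[-4,-1/4]$) bounds the number of such cosets by a universal constant. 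Combining (3) and (4) via the coset decomposition $\#\{d(\tilde x,\gamma\tilde x)<2\} \leq \#\{\text{cosets}\} \cdot \max_{\text{coset}} \#\{\gamma \text{ in that coset}, d(\tilde x,\gamma\tilde x)<4\}$ gives \eqref{eq: Preimage counting}.

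\medskip

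\textbf{Main obstacle.} The delicate point is step (3): getting the \emph{exact} exponent $\lfloor (n+1)/2 \rfloor$ rather than just "some exponential in $R$." In a hyperbolic Margulis tube around a closed geodesic of length $\ell$ and holonomy rotation $\theta \in SO(n-1)$, the translation length of $\gamma_0$ along an orbit at distance $t$ from the core behaves, via the hyperbolic law of cosines, roughly like $\ell\cosh t$ combined with the rotational part; the number of $k$ with $d(\tilde x,\gamma_0^k\tilde x)<2$ is comparable to $e^{d(x,\text{core})}/\ell$, and one must relate $d(x,\text{core})$ to $R = d(x,M_{\rm thick})$ while tracking that the rotation angles $k\theta \bmod 2\pi$ for small $k$ can be simultaneously close to $0$ in at most $\lfloor (n-1)/2 \rfloor = \lfloor (n+1)/2\rfloor - 1$... — careful bookkeeping of the torus of rotations acting on $\mathbb{R}^{n-1}$, together with a pigeonhole/Dirichlet argument on the simultaneous smallness of the $SO(n-1)$-rotation iterates, is what produces the precise floor function. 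I would handle this by passing to the explicit model $\mathbb{H}^n = \mathbb{H}^2 \times_{\text{warped}} \mathbb{R}^{n-2}$-type coordinates adapted to $\gamma_0$, reducing the count to a lattice-point count in a region of $\mathbb{Z}$ (the hyperbolic translation direction) crossed with the torus $(\mathbb{R}/2\pi\mathbb{Z})^{\lfloor (n-1)/2\rfloor}$ (the rotation directions), and applying a quantitative simultaneous-Diophantine-approximation estimate; the cusp case is analogous but simpler, with a lattice-point count in $\mathbb{R}^{n-1}$ replacing the tube computation and giving the same dimension count $\lfloor (n-1)/2\rfloor$ of "rotational" directions.
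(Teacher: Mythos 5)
Your outline follows essentially the same route as the paper: dispose of the thick case by packing, reduce the thin case to the cyclic group $\langle\varphi\rangle$ of the Margulis tube (your coset/packing step (4) is the justification the paper only implicitly invokes for its claim $B(\tilde x,1)\cap\pi^{-1}(x)=B(\tilde x,1)\cap\{\varphi^k(\tilde x)\}$), use $\sec=-1$ to transplant the count into an explicit hyperbolic tube, and extract the exponent $\lfloor\frac{n+1}{2}\rfloor$ from the fact that the $\bar\varphi$-orbit on the cylinder $Z(R)\cong S^{n-2}_{\sinh R}\times\bbR$ lies in a flat manifold $S^1_{|z_1^0|}\times\cdots\times S^1_{|z_m^0|}\times\bbR$ of dimension at most $\lfloor\frac{n-1}{2}\rfloor+1$. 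Where the paper runs a disjoint-ball volume count in that flat torus-times-line (Lemma~\ref{lem: Preimage counting main lemma}), you propose a simultaneous Diophantine/lattice-point count; these are interchangeable, and your dimension bookkeeping is correct. (One irrelevant item: $M$ is closed, so there are no cusps.)

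There is, however, one genuine gap, precisely at the point you flag but do not resolve: converting a count governed by $d(\bar x,\bar\gamma)$ and by the injectivity radius at $\bar x$ into a bound in terms of $d(x,M_{\rm thick})=d(\bar x,\partial\bar T)$. For $n\geq 4$ the tube boundary $\partial\bar T$ is \emph{not} at constant distance from the core (the injectivity radius on a fixed cylinder $Z(R)$ varies with the angular position because of the rotational part of $\bar\varphi$), so the nearest boundary point $\bar y$ is in general not radially outward from $\bar x$, and neither $d(\bar x,\bar\gamma)\leq d(\bar x,\partial\bar T)+\mathrm{const}$ nor a direct comparison of the counts at $\bar x$ and at $\bar y$ is automatic. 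The paper needs all of its Step 2 for this: Gromov $\delta$-hyperbolicity of $\bbH^n$ forces the minimal geodesic $\zeta$ from $\bar x$ to $\partial\bar T$ either to pass near the core (then one uses Reznikov's bound $1/\ell\leq Ce^{\lfloor\frac{n+1}{2}\rfloor R_y}$) or to fellow-travel the radial geodesics through $\bar x$ and $\bar y$ (then one uses monotonicity of the distance between radial geodesics together with the $C_0e^{R-R'}$-Lipschitz outward radial projection of Lemma~\ref{lem: preimage comparison along radial geodesics} to transport the count from $\bar x$ to the boundary point $\bar y$, where $\mathrm{inj}_{\overline{M}}(\bar y)=\mu$ is known). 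Note also that the exponential $e^{\lfloor\frac{n+1}{2}\rfloor d(x,M_{\rm thick})}$ ultimately arises as (Lipschitz constant of the radial projection)$^{\lfloor\frac{n+1}{2}\rfloor}$, not from a $1/\ell\sim e^{d(x,\mathrm{core})}$ heuristic; your ``comparable to $e^{d(x,\mathrm{core})}/\ell$'' is not the mechanism. Without some version of this comparison step your argument does not close.
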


The assumption $\sec=-1$ in $M_{\rm thin}$ will allow us to use $\bbH^n$ as a comparison space. Towards this end, we fix a geodesic $\overline{\gamma} \subseteq \bbH^n$ and an isometry $\overline{\varphi} \in {\rm Isom}^{+}(\bbH^n)$ that is a translation along $\overline{\gamma}$ with translation length $\ell > 0$. Moreover, we define ${\rm inj}_{\overline{M}}:\bbH^n \to \bbR$ by
\begin{equation}\label{eq: inj in comparison space}
		{\rm inj}_{\overline{M}}(\bar{x}):=\frac{1}{2}\min_{k \neq 0}d_{\bbH^n}\big(\overline{\varphi}^k(\bar{x}),\bar{x}\big).
\end{equation}
For $R > 0$ we denote by $Z(R)$ the cylinder $\{\bar{y} \in \bbH^n \, | \, d_{\bbH^n}(\bar{y},\overline{\gamma})=R\}$ of radius $R$ around $\overline{\gamma}$. Observe that $Z(R)$ is isometric to $S_{\sinh(R)}^{n-2} \times \bbR$, where $S_{\sinh(R)}^{n-2}$ is the round $(n-2)$-sphere of radius $\sinh(R)$.

The following lemma is the key technical ingredient for the proof of \Cref{Preimage counting - prop}.

\begin{lem}\label{lem: Preimage counting main lemma}
There is a constant \(C=C(n) > 0\) such that for every $\bar{y} \in \bbH^n$ and \(r \geq {\inj}_{\overline{M}}(\bar{y})\) it holds
\begin{equation}\label{Preimage counting - eq2}
	\#\Big\{ k \in \bbZ \, \big| \, d_{Z}\big(\overline{\varphi}^k(\bar{y}),\bar{y}\big) \leq r  \Big\} \leq C \left(\frac{r}{{\rm inj}_{\overline{M}}(\bar{y})} \right)^{\lfloor \frac{n+1}{2} \rfloor},
\end{equation}
where \(d_{Z}\) is the intrinsic distance in the cylinder \(Z \subseteq \bbH^n\) containing \(\bar{y}\).
\end{lem}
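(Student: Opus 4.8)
The plan is to use the product structure of the cylinder $Z$ through $\bar y$ to rewrite the left-hand side as a lattice-point count, and then to run a pigeonhole argument in exactly $\lfloor\frac{n+1}{2}\rfloor$ directions.

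First I would record how $\bar\varphi$ acts on $Z=Z(R)$. Writing $\rho=\sinh R$, so that $Z\cong S^{n-2}_\rho\times\bbR$ isometrically, and using that $\bar\varphi$ preserves $\bar\gamma$ and translates along it, $\bar\varphi$ preserves $Z$ together with its product splitting (the circle fibres lie over $\bar\gamma$) and acts by $(\theta,t)\mapsto(A\theta,t+L)$ for some $A\in SO(n-1)$ and some $L>0$. With $\bar y=(\theta_0,t_0)$ and $D(k):=d_Z(\bar\varphi^k\bar y,\bar y)$, the product metric gives
\[
D(k)^2=\rho^2\,d_{S^{n-2}}(A^k\theta_0,\theta_0)^2+k^2L^2 ,
\]
where $d_{S^{n-2}}$ is the unit-sphere distance. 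I would then decompose $\bbR^{n-1}$ into the rotation $2$-planes $P_1,\dots,P_m$ of $A$, with rotation angles $\phi_1,\dots,\phi_m\in(0,2\pi)$ and $m\le\lfloor\frac{n-1}{2}\rfloor$, plus the fixed subspace, and let $a_j\ge0$ be the norm of the $P_j$-component of $\theta_0$. Using $\|x-y\|\le d_{S^{n-2}}(x,y)\le\frac{\pi}{2}\|x-y\|$ on the unit sphere, the identity $\|R_\phi u-u\|^2=4\sin^2(\phi/2)$, and $\sin^2(s/2)\asymp\operatorname{dist}(s,2\pi\bbZ)^2$, this yields, with universal constants,
\[
D(k)^2\ \asymp\ \rho^2\sum_{j=1}^m a_j^2\,\|k\phi_j\|^2+k^2L^2 ,\qquad\text{where }\ \|s\|:=\operatorname{dist}(s,2\pi\bbZ)
\]
(a plane with $\phi_j=\pi$, on which $\bar\varphi$ acts by $-\mathrm{id}$, is consistent with this, since then $\|k\phi_j\|=\pi$ for odd $k$ and $0$ for even $k$).

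Next comes the counting. Suppose $D(k_i)\le r$ for $N$ distinct integers $k_1,\dots,k_N$. The displayed estimate forces each $k_i$ into a fixed interval of length $\le C(n)\,r/L$, and forces $k_i\phi_j$ to lie within $C(n)\,r/(a_j\rho)$ of $2\pi\bbZ$ for each $j$. For an integer $P\ge1$ I would partition the $k$-interval into $P$ equal pieces, partition each arc $\{\,s:\|s\|\le\min(C(n)\,r/(a_j\rho),\pi)\,\}$ into $P$ equal sub-arcs, and split $\bbZ$ by the parity of $k$, producing at most $2P^{m+1}$ cells. If $N>2P^{m+1}$, two of the $k_i$ fall into the same cell with the same parity; their difference $k\ne0$ is then even, with $|kL|\le C(n)\,r/P$ and $a_j\rho\,\|k\phi_j\|\le C(n)\,r/P$ for all $j$ (equal parity makes the $\phi_j=\pi$ contributions vanish), so $D(k)\le C(n)\,r/P$. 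Taking $P$ maximal with $2P^{m+1}<N$ produces a nonzero integer $k$ with $D(k)\le C(n)\,r\,N^{-1/(m+1)}$. Finally, since $d_{\bbH^n}\le d_Z$ pointwise, $2\,\mathrm{inj}_{\overline{M}}(\bar y)=\min_{k\ne0}d_{\bbH^n}(\bar\varphi^k\bar y,\bar y)\le D(k)$, whence $N\le C(n)\,\big(r/\mathrm{inj}_{\overline{M}}(\bar y)\big)^{m+1}$; since $m+1\le\lfloor\frac{n-1}{2}\rfloor+1=\lfloor\frac{n+1}{2}\rfloor$ and $r\ge\mathrm{inj}_{\overline{M}}(\bar y)$, the exponent can be raised to $\lfloor\frac{n+1}{2}\rfloor$, and the finitely many cases with $N$ below the pigeonhole cutoff are trivial because $r/\mathrm{inj}_{\overline{M}}(\bar y)\ge1$.

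The only genuine content is the pigeonhole, and the point requiring care is making the exponent exactly $\lfloor\frac{n+1}{2}\rfloor$: the $m$ rotation planes each contribute one angular spreading direction and the translation contributes one more, while the $2$-planes on which $\bar\varphi$ acts by $-\mathrm{id}$ must be absorbed into a single parity pigeonhole rather than counted as separate directions — treating them separately would overshoot the exponent, and one also has to check the partition works uniformly when some arc $\{s:\|s\|\le C(n)\,r/(a_j\rho)\}$ is all of the circle. By contrast, the geometric setup in the first step is routine, and passing from $D(k)$ to $\mathrm{inj}_{\overline{M}}$ uses only the trivial inequality $d_{\bbH^n}\le d_Z$, so no hyperbolic trigonometry enters.
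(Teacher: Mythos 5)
Your argument is correct, and its first half (the product structure of $Z(R)$, the splitting of $\bar\varphi^{\perp}$ into rotation planes, and the resulting formula $d_Z(\bar\varphi^k\bar y,\bar y)^2\asymp\rho^2\sum_j a_j^2\,\mathrm{dist}(k\phi_j,2\pi\bbZ)^2+k^2L^2$) is essentially identical to Step~1 of the paper, which phrases the same reduction as containment of the orbit in a sub-torus $T^m_{\bar y}\times\bbR\cong S^1_{|z_1^0|}\times\dots\times S^1_{|z_m^0|}\times\bbR$ whose intrinsic metric is comparable to $d_Z$. Where you genuinely diverge is the counting step: the paper packs pairwise disjoint balls of radius $\mathrm{inj}_{\overline M}(\bar y)$ around the orbit points inside $T^m_{\bar y}\times\bbR$ and compares volumes, splitting the circle factors into those of diameter $\le\mathrm{inj}_{\overline M}(\bar y)$ (whose contribution cancels between the upper and lower volume bounds) and the rest; you instead run a pigeonhole in the $m+1$ coordinates $(k\phi_1,\dots,k\phi_m,kL)$ to extract, from $N$ orbit points within distance $r$, a nonzero $k$ with $d_Z(\bar\varphi^k\bar y,\bar y)\le C rN^{-1/(m+1)}$, and then invoke $2\,\mathrm{inj}_{\overline M}(\bar y)\le d_{\bbH^n}\le d_Z$. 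Both yield the exponent $m+1\le\lfloor\frac{n+1}{2}\rfloor$ and both must handle the same degeneracy (your arcs that wrap the whole circle correspond exactly to the paper's circles of diameter $\le\mathrm{inj}_{\overline M}(\bar y)$); the volume argument is somewhat cleaner because it dispenses with the parity/wrap-around case analysis automatically, while your pigeonhole avoids computing volumes of balls in products of circles of disparate radii. One small remark: the parity split you worry about is actually redundant, since a plane with $\phi_j=\pi$ is handled by the generic arc-pigeonhole (either the arc constraint already forces all admissible $k_i$ to be even, or the arc is the whole circle and the sub-arc argument applies verbatim), and in any case it only costs a universal factor $2$, not an extra power.
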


The reason for the exponent \(\left\lfloor \frac{n+1}{2} \right\rfloor\) is the following. If \(\varphi:S^{n-2} \times \bbR \to S^{n-2} \times \bbR\) is of the form \(\varphi(v,t)=(Av,t+\tau)\) for some \(A \in {\rm SO}(\bbR^{n-1})\) and \(\tau \in \bbR\), then any orbit of \(\varphi\) is contained in a flat manifold of dimension at most \(\left\lfloor \frac{n+1}{2} \right\rfloor\) because, by elementary linear algebra, any orbit \(\{A^k(v_0)\}_{k \in \bbZ}\) is contained in a flat torus of dimension at most \(\left\lfloor \frac{n+1}{2} \right\rfloor-1\).

\begin{proof}We split the proof into two steps.

\smallskip
\textit{Step 1 (Reduction to a torus):} We denote by \(\overline{\varphi}^{\perp} \in {\rm SO}(\bbR^{n-1})\) the identification of the orthogonal restriction \((d\bar{\varphi})|_{\overline{\gamma}^{\perp}}\) with an isometry of \(\bbR^{n-1}\) via parallel transport. Abbreviate \(R:=d(\bar{y},\overline{\gamma})\), so that \(Z(R)\) is the cylinder containing \(\bar{y}\). Under the isometry \(Z(R) \cong S_{\sinh(R)}^{n-2} \times \bbR\) the restriction \(\bar{\varphi}:Z(R) \to Z(R)\) is given by
\[
	\overline{\varphi}: S_{\sinh(R)}^{n-2} \times \bbR \to S_{\sinh(R)}^{n-2} \times \bbR, (v,t) \mapsto \big(\overline{\varphi}^{\perp}(v),t+\ell \cosh(R)\big).
\]

For ease of notation we only consider the case that $n$ is odd, and we write \(n=2m+1\), so that \(\bbR^{n-1}=\bbR^{2m}\). Since \(\overline{\varphi}^{\perp} \in {\rm SO}(\bbR^{2m})\), we may, after a change of orthonormal basis, assume that \(\overline{\varphi}^{\perp}\) is of the form
\[
	\overline{\varphi}^{\perp}:\bbC^m \to \bbC^m, (z_1,...,z_m) \mapsto \left(e^{i\theta_1}z_1,..., e^{i\theta_m}z_m \right)
\]
for some \(\theta_1,...,\theta_m \in \bbR\). Let \((v_0,t_0)\) be the point corresponding to \(\bar{y}\) under the isometry \(Z(R) \cong S_{\sinh(R)}^{n-2} \times \bbR\), and write \(v_0=\big(z_1^0,...,z_m^0\big) \in S_{\sinh(R)}^{n-2} \subseteq \bbC^m\). Define 
\[
	T_{\bar{y}}^m:=\Big\{\big(\lambda_1 z_1^0,...,\lambda_m z_m^0\big) \, | \, \lambda_i \in S^1 \subseteq \bbC\Big\},
\]
i.e., \(T_{\bar{y}}^m\) is the orbit of \(v_0 \in S_{\sinh(R)}^{n-2}\) under the the isometric action of \(T^m=S^1 \times ... \times S^1\) on \(\bbC^m\). Note that \(T_{\bar{y}}^m\) is isometric to
\[
	T_{\bar{y}}^m \cong S^1_{1}  \times ... \times S^1_{m},
\]
where \(S_j^1:=S^1_{|z_j^0|}\) is the circle of radius \(|z_j^0|\) (here it is understood that \(S^1_{j}=\{{\rm pt}\}\) if \(z_j^0=0\)). Observe that, under the isometry  \(Z(R) \cong S_{\sinh(R)}^{n-2} \times \bbR\), the orbit \(\{\overline{\varphi}^k(\bar{y})\}_{k \in \bbZ}\) is contained in \(T_{\bar{y}}^m \times \bbR\). Moreover, up to universal constants, the intrinsic distance in \(T_{\bar{y}}^m = S^1_1  \times ... \times S^1_{m}\) agrees with the extrinsic distance in \(\bbC^m\) (and hence also with the distance in \(S_{\sinh(R)}^{2m-1}\)). Therefore, it suffices to prove the desired estimate (\ref{Preimage counting - eq2}) with $d_{T_{\bar{y}}^m \times \bbR}$ instead of $d_Z$.

\smallskip
\textit{Step 2 (Volume counting):} 
Let \(0 \leq a \leq m\) be the number of factors \(S_{j}^1\) of \(T_{\bar{y}}^m\) satisfying
\[
	{\rm diam}(S_{j}^1) \leq {\rm inj}_{\overline{M}}(\bar{y}),
\]
where \(\rm diam\) stands for the intrinsic diameter of the circle. We may, after reordering, assume that this is the case for the first \(a\) factors \(S_{1}^1\),..., \(S_{a}^1\) of $T_{\bar{y}}^m \cong S^1_{1}  \times ... \times S^1_{m}$. 

Note \(d_{\bbH^n}(\cdot,\cdot) \leq d_{T_{\bar{y}}^{m} \times \bbR}(\cdot,\cdot)\) since \(T_{\bar{y}}^{m} \times \bbR \subseteq Z(R) \subseteq \bbH^n\). Thus, by the definition (\ref{eq: inj in comparison space}) of ${\rm inj}_{\overline{M}}(\bar{y})$, we have for 
\(
	{\rm inj}_{\overline{M}}(\bar{y}) 
	\leq 
	\frac{1}{2}d_{T_{\bar{y}}^{m} \times \bbR}\big(\bar{y},\overline{\varphi}^k(\bar{y})\big)
\)
for \(k \neq 0\). Hence the balls \(B\big(\overline{\varphi}^k(\bar{y}),{\rm inj}_{\overline{M}}(\bar{y})\big) \subseteq T_{\bar{y}}^{m}\times \bbR\) (\(k \in \bbZ\)) are pairwise disjoint because \(\overline{\varphi}\) is an isometry. Note that, for any \(r \geq 0\), the volume of balls of radius \(r\) in \(T_{\bar{y}}^{m}\times \bbR\) is bounded from above by 
\[
	{\rm vol}_{T_{\bar{y}}^m \times \bbR}\big(B({\rm pt},r) \big) \leq Cr^{m-a+1}\prod_{j=1}^a {\rm diam}(S_j^1).
\]
Since ${\rm inj}_{\overline{M}}(\bar{y}) \geq {\rm diam}(S_j^1)$ for $j=1,\dots,a$ and ${\rm inj}_{\overline{M}}(\bar{y}) \leq {\rm diam}(S_j^1)$ for $j=a+1,\dots,m$, we can also bound the volume of balls of radius ${\rm inj}_{\overline{M}}(\bar{y})$ from below by
\[
	\frac{1}{C}{\rm inj}_{\overline{M}}(\bar{y})^{m-a+1}\prod_{j=1}^a {\rm diam}(S_j^1) 
	\leq {\rm vol}_{T_{\bar{y}}^m \times \bbR} \big(B\big({\rm pt},{\rm inj}_{\overline{M}}(\bar{y}) \big) \big).
\]
Therefore, a volume counting argument shows that for all \(r \geq {\rm inj}_{\overline{M}}(\bar{y})\) we have
\[
	\#\Big\{ k \in \bbZ \, \big| \, d_{T_{\bar{y}}^{m}\times \bbR}\big(\overline{\varphi}^k(\bar{y}),\bar{y} \big) \leq r \Big\} \leq C\left(\frac{r}{{\rm inj}_{\overline{M}}(\bar{y})}\right)^{m-a+1} \leq C\left(\frac{r}{{\rm inj}_M(x)}\right)^{\lfloor \frac{n+1}{2} \rfloor}
\]
where in the second inequality we used \(r \geq {\rm inj}_{\overline{M}}(\bar{y})\) and \(m-a+1 \leq m+1=\lfloor \frac{n+1}{2} \rfloor\) when \(n=2m+1\). Keeping in mind the end of Step 1, this completes the proof.
\end{proof}

We will also need the following elementary result. In its formulation, a \textit{radial geodesic} is a geodesic \(\sigma:[0,\infty) \to \bbH^n\) with \(\sigma(0) \in \overline{\gamma}\) and \(\sigma^\prime(0) \perp \overline{\gamma}\).

\begin{lem}\label{lem: preimage comparison along radial geodesics}
There is a universal constant \(C_0 > 0\) with the following property. Let \(\sigma:[0,\infty) \to \bbH^n\) be a radial geodesic and \(2 \leq R^\prime \leq R < \infty\). Set \(\bar{x}=\sigma(R)\) and \(\bar{x}^\prime=\sigma(R^\prime)\). Then for all \(r \geq 0\) we have
\begin{align*}
	\#\left\{ k \in \bbZ \, \Big| \, d_{Z(R)}\big(\overline{\varphi}^k(\bar{x}),\bar{x}\big) \leq r   \right\}  
	\leq & 
	\#\Big\{ k \in \bbZ \, \big| \, d_{Z(R^\prime)}\big(\overline{\varphi}^k(\bar{x}^\prime),\bar{x}^\prime\big) \leq r  \Big\}  \\
	\leq &
	\#\left\{ k \in \bbZ \, \big| \, d_{Z(R)}\big(\overline{\varphi}^k(\bar{x}),\bar{x}\big) \leq C_0e^{R-R^\prime}r  \right\} ,
\end{align*}
where \(d_{Z(R)}\) and \(d_{Z(R^\prime)}\) are the intrinsic distances in the cylinders \(Z(R)\) and \(Z(R^\prime)\).
\end{lem}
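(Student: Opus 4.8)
The plan is to pass to Fermi coordinates around the axis $\bar{\gamma}$. Every point of $\bbH^n \setminus \bar{\gamma}$ is written uniquely as $(t,\rho,\theta)$, where $t \in \bbR$ is the arclength of its orthogonal projection to $\bar{\gamma}$, $\rho>0$ is its distance to $\bar{\gamma}$, and $\theta \in S^{n-2}$ records the direction in which the connecting geodesic leaves $\bar{\gamma}$; in these coordinates the hyperbolic metric is $d\rho^2 + \cosh^2(\rho)\,dt^2 + \sinh^2(\rho)\,g_{S^{n-2}}$, so the cylinder $Z(\rho_0)$ carries the metric $\cosh^2(\rho_0)\,dt^2 + \sinh^2(\rho_0)\,g_{S^{n-2}}$ and the radial projection $P_{\rho_0\to\rho_1}\colon Z(\rho_0)\to Z(\rho_1)$, $(t,\theta)\mapsto(t,\theta)$, is well defined. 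Since $\bar{\varphi}$ is an orientation-preserving isometry with axis $\bar{\gamma}$, it maps radial geodesics to radial geodesics and preserves the distance to $\bar{\gamma}$; equivalently, it is in the normal form recorded in the proof of \Cref{lem: Preimage counting main lemma}. In particular $P_{R\to R'}$ carries $\bar{x}=\sigma(R)$ to $\bar{x}'=\sigma(R')$ and $\bar{\varphi}^k(\bar{x})$ to $\bar{\varphi}^k(\bar{x}')$ for every $k$, so the two counting problems are transported to one another by $P_{R\to R'}$.

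Next I would compare the two cylinder metrics through $P_{R\to R'}$. For a curve $c=(t(s),\theta(s))$ in $Z(R)$, the image curve $P_{R\to R'}(c)$ in $Z(R')$ has length $\int \sqrt{\cosh^2(R')\,\dot t^2 + \sinh^2(R')\,|\dot\theta|^2}\,ds$; since $R\ge R'$ gives $\cosh R'\le\cosh R$ and $\sinh R'\le\sinh R$, this is at most the length of $c$ in $Z(R)$. Taking $c$ to be the $Z(R)$-geodesic from $\bar{x}$ to $\bar{\varphi}^k(\bar{x})$ yields $d_{Z(R')}(\bar{\varphi}^k(\bar{x}'),\bar{x}') \le d_{Z(R)}(\bar{\varphi}^k(\bar{x}),\bar{x})$, from which the first inequality follows by comparing the two counting sets. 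For the second inequality one runs the same argument in the opposite direction: because $R'\ge 2$ one has $\sinh R' \ge \tfrac12(1-e^{-4})e^{R'}$, hence $\cosh R \le C_0 e^{R-R'}\cosh R'$ and $\sinh R \le C_0 e^{R-R'}\sinh R'$ for a universal constant $C_0$, so the length of $P_{R\to R'}^{-1}(c')$ in $Z(R)$ is at most $C_0 e^{R-R'}$ times the length of $c'$ in $Z(R')$ for every curve $c'$. Feeding in the $Z(R')$-geodesic from $\bar{x}'$ to $\bar{\varphi}^k(\bar{x}')$ gives $d_{Z(R)}(\bar{\varphi}^k(\bar{x}),\bar{x}) \le C_0 e^{R-R'}\, d_{Z(R')}(\bar{\varphi}^k(\bar{x}'),\bar{x}')$, which is exactly the second inequality.

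There is no real obstacle here; the only points that need a little care are that $\bar{\varphi}$ genuinely respects the radial structure, so that $P_{R\to R'}$ is $\bar{\varphi}$-equivariant, and that the constant $C_0$ is truly universal — this is where the hypothesis $R'\ge 2$ is used, namely to keep $\sinh R'$ comparable to $e^{R'}$ so that dividing by it costs only a bounded factor (for small $R'$ the ratio $\sinh R/\sinh R'$ would not be controlled by $C_0 e^{R-R'}$). Everything else is the elementary monotonicity of curve length under the metric comparisons $\cosh R'\le\cosh R$, $\sinh R'\le\sinh R$ and their reverses up to the factor $C_0 e^{R-R'}$.
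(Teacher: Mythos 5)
Your proposal is correct and follows essentially the same route as the paper: the paper's proof also rests on the observation that the radial projection between the cylinders is $\bar{\varphi}$-equivariant, distance non-decreasing in the outward direction, and $C_0e^{R-R'}$-Lipschitz, using the identification $Z(\rho)\cong S^{n-2}_{\sinh\rho}\times\bbR$ and the fact that $\sinh$ and $\cosh$ are comparable to $e^s$ for $s\geq 2$. Your Fermi-coordinate computation is just a slightly more explicit version of the same comparison.
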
 

\begin{proof}This is a straightforward consequence of the fact that the outward radial projection \(Z(R^\prime) \to Z(R)\) is \(\overline{\varphi}\)-equivariant, distance non-decreasing and \(C_0e^{R-R^\prime}\)-Lipschitz. To see the latter two points, observe that under the isometries \(Z(R) \cong S_{\sinh(R)}^{n-2} \times \bbR\) and \(Z(R^\prime) \cong S_{\sinh(R^\prime)}^{n-2} \times \bbR\) the outward radial projection \(Z(R^\prime) \to Z(R)\) is given by 
\[
	(\theta,t) \mapsto \left(\frac{\sinh(R)}{\sinh(R^\prime)}\theta,\frac{\cosh(R)}{\cosh(R^\prime)}t \right).
\]
Thus the outward radial projection is distance non-decreasing. Note that, for \(s \geq 2\), \(\sinh(s)\) and \(\cosh(s)\) agree with \(e^s\) up to universal constant. Since \(R,R^\prime \geq 2\) by assumption, this shows that the outward radial projection is also \(C_0e^{R-R^\prime}\)-Lipschitz. 
\end{proof}

We are now in the position to present the proof of \Cref{Preimage counting - prop}.

\begin{proof}[Proof of \Cref{Preimage counting - prop}]
The desired bound (\ref{eq: Preimage counting}) trivially holds when \(x \in M_{\rm thick}\). So it suffices to consider \(x \in M_{\rm thin}\). Let \(T\) be the Margulis tube containing \(x\) with core geodesic \(\gamma\). Fix a lift $\tilde{\gamma} \subseteq \tilde{M}$ of $\gamma$, and let $\tilde{T} \subseteq \tilde{M}$ be the component of $\pi^{-1}(T)$ containing $\tilde{\gamma}$. Let $\varphi \in {\rm Deck}(\pi)$ be the Deck transformation that is a translation along $\tilde{\gamma}$ with translation length $\ell(\gamma)$. Fix a lift $\tilde{x} \in \tilde{T}$ of $x$. Note that \(B(\tilde{x},1) \cap \pi^{-1}(x)=B(\tilde{x},1) \cap \{\varphi^k(\tilde{x})\}_{k \in \bbZ}\).

\smallskip
\textit{Step 1 (Comparison with $\bbH^n$):} Fix a geodesic $\overline{\gamma} \subseteq \bbH^n$. Using exponential normal coordinates around $\tilde{\gamma}\subseteq \tilde{M}$ and $\overline{\gamma} \subseteq \bbH^n$ gives an obvious diffeomorphism $\Phi:\tilde{M} \to \bbH^n$. There exists an isometry $\overline{\varphi} \in {\rm Isom}^{+}(\bbH^n)$ that is a translation along $\overline{\gamma}$ with translation length $\ell(\gamma)$ and such that $\Phi \circ \varphi = \overline{\varphi} \circ \Phi$. We claim that for all $\tilde{y} \in \tilde{T}$ and $k \in \bbZ$ we have
\begin{equation}\label{eq: distance in comparison space}
	d_{\tilde{M}}\big(\tilde{y},\varphi^k(\tilde{y})\big)=d_{\bbH^n}\big(\Phi(\tilde{y}),\overline{\varphi}^k(\Phi(\tilde{y}))\big).
\end{equation}
Since $\sec=-1$ in $M_{\rm thin}$, the restriction $\Phi|_{\tilde{T}}$ is a Riemannian isometry onto its image, and thus it suffices to check that the geodesic segment from $\tilde{y}$ to $\varphi^k(\tilde{y})$ is contained in $\tilde{T}$.

To see this, fix any $\tilde{y} \in \tilde{T}$ and $k_0 \in \bbZ$. Denote by $\sigma:[0,1] \to \tilde{M}$ the geodesic segment from $\tilde{y}$ to $\varphi^{k_0}(\tilde{y})$. Since $\sec(\tilde{M}) \leq 0$, the function $[0,1] \ni t \mapsto d_{\tilde{M}}\big(\sigma(t),\varphi^k(\sigma(t))\big)$ is convex for all $k \in \bbZ$, and thus attains its maximum at $t=0$ or $t=1$. But as $\sigma(1)=\varphi^{k_0}(\sigma(0))$, the values at $t=0$ and $t=1$ coincide. Consequently, for all $t \in [0,1]$ we have
\[
	\frac{1}{2}\min_{k \neq 0}d_{\tilde{M}}\big(\sigma(t),\varphi^k(\sigma(t))\big) \leq \frac{1}{2}\min_{k \neq 0}d_{\tilde{M}}\big(\tilde{y},\varphi^k(\tilde{y})\big)={\rm inj}_M(y) \leq \mu,
\]
where at the end we used that $y:=\pi(\tilde{y}) \in T \subseteq M_{\rm thin}$ as $\tilde{y} \in \tilde{T}$. This implies that ${\rm inj}_M\big(\pi(\sigma(t))\big) \leq \mu$ for all $t \in [0,1]$, and thus $\sigma \subseteq \tilde{T}$, establishing (\ref{eq: distance in comparison space}).

Set $\overline{T}:=\Phi(\tilde{T})$. Because of (\ref{eq: distance in comparison space}), the desired estimate (\ref{eq: Preimage counting}) follows if for all $\bar{x} \in \overline{T}$ we can show
\begin{equation}\label{eq: Preimage counting in comparison space}
	\#\Big\{k \in \bbZ \, | \, d_{\bbH^n}\big(\bar{x},\overline{\varphi}^k(\bar{x})\big) \leq 1 \Big\} \leq C\exp\left(\left\lfloor \frac{n+1}{2} \right\rfloor d_{\bbH^n}\big(\bar{x}, \partial \overline{T}\big)\right).
\end{equation}

Fix $\bar{x} \in \overline{T}$ and choose a minimal geodesic \(\zeta \subseteq \overline{T}\) from \(\bar{x}\) to \(\partial \overline{T}\) (\(\zeta\) will in general \textit{not} be a radial geodesic when \(n \geq 4\)). Denote by \(\bar{y} \in \partial \overline{T}\) the endpoint of \(\zeta\). Abbreviate \(R_x=d_{\bbH^n}(\bar{x},\overline{\gamma})\) and \(R_y=d_{\bbH^n}(\bar{y},\overline{\gamma})\).

\smallskip
\textit{Step 2 (Proving (\ref{eq: Preimage counting in comparison space})):} Towards proving (\ref{eq: Preimage counting in comparison space}) we first establish the following claim.

\smallskip\noindent
\textit{Claim. For every $r \geq 2$ there exists $C_r=C(n,r) > 0$ with the following property. If $\zeta$ passes through the $r$-neighbourhood $N_r(\overline{\gamma})$ of $\overline{\gamma}$, then
\[
	\#\Big\{k \in \bbZ \, | \, d_{\bbH^n}\big(\bar{x},\overline{\varphi}^k(\bar{x})\big) \leq 1 \Big\} \leq C_r\exp\left(\left\lfloor \frac{n+1}{2} \right\rfloor d_{\bbH^n}\big(\bar{x}, \partial \overline{T}\big)\right).
\]}

\smallskip\noindent
\textit{Proof of Claim.}
Clearly, as $\overline{\varphi}$ has translation length $\ell:=\ell(\gamma)$, we see
\begin{equation*}
	\#\Big\{k \in \bbZ \, | \, d_{\bbH^n}\big(\bar{x},\overline{\varphi}^k(\bar{x}) \big) \leq 1\Big\} \leq \frac{C}{\ell}.
\end{equation*}
Since $\bar{y} \in \partial \overline{T}$, i.e., ${\rm inj}_{\overline{M}}(\bar{y})=\mu$, \cite[Lemma 1]{Rez95} states $1/\ell \leq C \exp\left(\left\lfloor \frac{n+1}{2} \right\rfloor R_y \right)$ (this can also be easily deduced from \Cref{lem: Preimage counting main lemma} and \Cref{lem: preimage comparison along radial geodesics}). Finally, $d(\bar{x},\partial \overline{T})=\ell(\zeta) \geq R_y-r$ because $\zeta$ passes through $N_r(\overline{\gamma})$. Combining these inequalities yields the claim.
\hfill$\blacksquare$

\smallskip
Fix $\delta \geq 1$ such that $\bbH^n$ is Gromov $\delta$-hyperbolic. Denote by $\sigma_x$ and $\sigma_y$ the radial geodesics from $\overline{\gamma}$ to $\bar{x}$ and $\bar{y}$. Then, by Gromov $\delta$-hyperbolicity,
\[
	\zeta \subseteq N_{2\delta}(\overline{\gamma}) \cup N_{2\delta}(\sigma_x) \cup N_{2\delta}(\sigma_y).
\] 
If $\zeta$ passes through $N_{12\delta}(\overline{\gamma})$, then the desired estimate (\ref{eq: Preimage counting in comparison space}) follows from the above claim. 

We may thus assume that $\zeta$ and $N_{12\delta}(\overline{\gamma})$ are disjoint, so that $\zeta \subseteq  N_{2\delta}(\sigma_x) \cup N_{2\delta}(\sigma_y)$. Then $\zeta \cap  N_{2\delta}(\sigma_x) \cap N_{2\delta}(\sigma_y) \neq \emptyset$ because $\zeta$ is connected and $N_{2\delta}(\sigma_x), N_{2\delta}(\sigma_y)$ are open. Consequently, there exist $\bar{x}^\prime \in \sigma_x([0,R_x])$ and $\bar{y}^\prime \in \sigma_y([0,R_y])$ such that 
\[
	d_{\bbH^n}(\bar{x}^\prime,\bar{y}^\prime) < 4\delta
	\quad \text{and} \quad 
	d(\bar{y}^\prime,\zeta) < 2\delta. 
\]
For two radial geodesics $\sigma_1,\sigma_2$, the function $ t \mapsto d_{\bbH^n}(\sigma_1(t),\sigma_2(t))$ ($t \geq 0$) is monotone increasing due to the convexity of $d_{\bbH^n}(\cdot,\cdot)$. Thus
\[
	\#\Big\{k \in \bbZ \, | \, d_{\bbH^n}\big(\overline{\varphi}^k(\bar{x}),\bar{x} \big) \leq 1\Big\}
	\leq 
	\#\Big\{k \in \bbZ \, | \, d_{\bbH^n}\big(\overline{\varphi}^k(\bar{x}^\prime),\bar{x}^\prime \big) \leq 1\Big\}.
\]
Using $d_{\bbH^n}(\bar{x}^\prime,\bar{y}^\prime) \leq 4\delta$ and that $\overline{\varphi}$ is an isometry, we see
\[
	\#\Big\{k \in \bbZ \, | \, d_{\bbH^n}\big(\overline{\varphi}^k(\bar{x}^\prime),\bar{x}^\prime \big) \leq 1\Big\}
	\leq 
	\#\Big\{k \in \bbZ \, | \, d_{\bbH^n}\big(\overline{\varphi}^k(\bar{y}^\prime),\bar{y}^\prime \big) \leq 8\delta+1\Big\}.
\]
As $d(\bar{y}^\prime,\overline{\gamma})\geq d(\zeta,\overline{\gamma})-d(\bar{y}^\prime,\zeta) \geq 10\delta \geq (8\delta+1)+1$, there exists $r_0=r_0(\delta) \geq \mu $ such that
\[
	\#\Big\{k \in \bbZ \, | \, d_{\bbH^n}\big(\overline{\varphi}^k(\bar{y}^\prime),\bar{y}^\prime \big) \leq 8\delta+1\Big\}
	\leq 
	\#\Big\{k \in \bbZ \, | \, d_{Z(R^\prime)}\big(\overline{\varphi}^k(\bar{y}^\prime),\bar{y}^\prime \big) \leq r_0\Big\},
\]
where $d_{Z(R^\prime)}$ is the intrinsic distance in the cylinder $Z(R^\prime)$ containing $\bar{y}^\prime$. Appealing first to \Cref{lem: preimage comparison along radial geodesics} and then to \Cref{lem: Preimage counting main lemma} yields
\begin{align*}
	\#\Big\{k \in \bbZ \, | \, d_{Z(R^\prime)}\big(\overline{\varphi}^k(\bar{y}^\prime),\bar{y}^\prime \big) \leq r_0\Big\} \leq &
	\#\Big\{k \in \bbZ \, | \, d_{Z(R_y)}\big(\overline{\varphi}^k(\bar{y}),\bar{y} \big) \leq C_0r_0e^{R_y-R^\prime}\Big\} \\
	\leq & C \left( \frac{C_0r_0 e^{R_y-R^\prime}}{{\rm inj}_{\overline{M}}(\bar{y})}\right)^{\left\lfloor \frac{n+1}{2}\right\rfloor}.
\end{align*}
Note ${\rm inj}_{\overline{M}}(\bar{y})=\mu$ since $\bar{y} \in \partial \overline{T}$, and $R_y-R^\prime=d(\bar{y}^\prime,\bar{y}) \leq 2\delta+\ell(\zeta) = 2\delta+d(\bar{x}, \partial \overline{T})$ by the choice of $\bar{y}^\prime$. Therefore, combining the above inequalities yields (\ref{eq: Preimage counting in comparison space}), and thus completes the proof.
\end{proof}

\section{Invertibility of \(\mathcal{L}\)}\label{sec: Invertibility of L}

In order to apply to inverse function theorem to the Einstein operator $\Phi_{\bar{g}}$, we need to show that its linearization $\mathcal{L}$ at the background metric $\bar{g}$ is invertible. Recall from (\ref{eq: linearisation of Einstein}) that this linearization is
\[
	\mathcal{L}h \coloneq (D\Phi_{\bar{g}})_{\bar{g}}(h)=\frac{1}{2}\Delta_Lh+(n-1)h.
\]
Since the pinching constant $\varepsilon_0$ in \Cref{Main Theorem - introduction} is \emph{not} allowed to depend on geometric quantities such as $\vol(M)$, we have to show that $||\mathcal{L}^{-1}||_{\rm op}$ is bounded from above by a universal constant. To achieve this, we will consider certain \textit{hybrid norms} adapted to our given geometric setting.

\subsection{Hybrid norms}\label{subsec: hybrid norms}
Before we can give the definition of the hybrid norms, we have to fix some constants. For all $n \geq 4$ we define
\begin{equation}\label{eq: def of lambda_0}
	\lambda_0 \coloneq \lambda_0(n) \coloneq \max\left\{n-2,\frac{(n-1)^2}{4}-2\right\}.
\end{equation}
Then, by \Cref{lem: Koiso} and \Cref{lem: spectral gap in hyperbolic tube}, in any $n$-dimensional infinite hyperbolic tube the operator $2\mathcal{L}$ has spectral gap at least $\lambda_0$.

Observe that $2\sqrt{\lambda_0} > \left\lfloor \frac{n+1}{2} \right\rfloor$ for all $n \geq 4$. So we can, once and for all, fix some
\begin{equation}\label{eq: def of beta}
	\beta=\beta(n) \in \big(0,\sqrt{\lambda_0} \big) 
	\quad \text{such that} \quad
	2\beta > \left\lfloor \frac{n+1}{2} \right\rfloor.
\end{equation}
Then, by \Cref{rem: weighted L^2}, in any infinite hyperbolic tube we obtain weighted $L^2$-estimates for $\mathcal{L}$ with weight $\varphi^2$ if $\log(\varphi)$ is $\beta$-Lipschitz.

Let \(M\) be a compact Riemannian manifold of dimension \(n \geq 4\) with $\sec(M) \in [-n,0)$. We abbreviate
\begin{equation}\label{eq: def M'thin}
	M_{\rm thin}^\prime \coloneq \big\{x \in M \, | \, \inj(x) \leq \mu_n/4 \big\}
	\quad \text{and} \quad
	M_{\rm thick}^\prime \coloneq \big\{x \in M \, | \, \inj(x) \geq \mu_n/4 \big\}.
\end{equation}
For any basepoint \(x \in M_{\rm thin}^\prime\) we define the semi-norms
\begin{equation}\label{eq: weighted H^2-norm}
	||h||_{H^2(T_x;\omega_x)}:=\left(\int_{T(x)} e^{-2\beta r_x(y)}\big(|h|^2+|\nabla h|^2+|\Delta h|^2 \big)(y)  \, d{\rm vol}(y)\right)^{\frac{1}{2}}
\end{equation}
and
\begin{equation}\label{eq: weighted L^2-norm}
	||f||_{L^2(T_x;\omega_x)}:=\left(\int_{T(x)} e^{-2\beta r_x(y)}|f|^2(y)  \, d{\rm vol}(y)\right)^{\frac{1}{2}},
\end{equation}
where \(r_x(y)=d_M(x,y)\) and $T(x) \subseteq M_{\rm thin}$ is the Margulis tube containing $x$. 


We can now give the definition of the hybrid norms. In its formulation, for any Margulis tube $T \subseteq M_{\rm thin}$, we denote by $\eta_T$ the cut-off function given by \Cref{lem: cut-off fct}.

\begin{definition}[Hybrid norms]\label{def: hybridnorm}
For \(\alpha \in (0,1)\) and $k \in \{0,2\}$ the \textit{hybrid norms} \(|| \cdot ||_k \) on \(C^{k,\alpha}\big({\rm Sym}^2(T^*M)\big)\) are defined as
\begin{equation*}
	||h||_2:=\max \left\{
	||h||_{C^{2,\alpha}(M)}
	\, , \, 
	||h||_{H^2(M)}
	\, , \,
	\sup_{x \in M_{\rm thin}^\prime}e^{\frac{1}{2}\lfloor \frac{n+1}{2} \rfloor d(x,M_{\rm thick}^\prime)}\left|\left|\eta_{T(x)}h\right|\right|_{H^2(T_x;\omega_x)} 
	\right\}
\end{equation*}
and
\begin{equation*}
	||f||_0:=\max \left\{
	||f||_{C^{0,\alpha}(M)} 
	\, , \, 
	||f||_{L^2(M)}
	\, , \, 
	\sup_{x \in M_{\rm thin}^\prime}e^{\frac{1}{2}\lfloor \frac{n+1}{2} \rfloor d(x,M_{\rm thick}^\prime)}\left|\left|\eta_{T(x)}f\right|\right|_{L^2(T_x;\omega_x)}\right\}.
\end{equation*}
\end{definition} 

Here we use the following notion of H\"older norm: For a Riemannian manifold $M$ as in \Cref{Main Theorem - introduction}, the universal covering $\tilde{M}$ has infinite injectivity radius and its Ricci tensor has uniformly bounded $C^1$-norm. Thus, by a result of Anderson \cite{And90}, around every point in $\tilde{M}$ there exists a harmonic chart of a priori size and for which the coefficients $\bar{g}_{ij}$ of the metric have controled $C^{2,\alpha}$-norm. The H\"older norm for a tensor on $\tilde{M}$ is defined as the H\"older norm of its coefficients in these harmonic charts. Finally, we can extend this notion to tensors on $M$ by defining their H\"older norm as the H\"older norm of their lift to $\tilde{M}$. For further details we refer the reader to \cite[Proof of Proposition 2.5 and Remark 2.7]{HJ22}.

\subsection{A priori estimate}\label{subsec: a priori for L}

We now prove the main result of this section. Namely, we show that with respect to the hybrid norms $||\cdot||_2$ and $||\cdot||_0$ from \Cref{def: hybridnorm}, the linearized Einstein operator $\mathcal{L}=\frac{1}{2}\Delta_L+(n-1){\rm id}$ is uniformly invertible.

\begin{prop}\label{prop: L is invertible}For all \(n \geq 4\), \(\alpha \in (0,1)\), $K > 0$, and \(\Lambda \geq 0\) there exists a constant \(C=C(n,\alpha,K,\Lambda)\) with the following property. Let \(M\) be a closed Riemannian \(n\)-manifold satisfying
\[
	 \sec(M) \in [-n,-K],
	 \quad 
	 \sec=-1 \text{ in }M_{\rm thin},
	 \quad \text{and} \quad
	  ||\nabla \Ric||_{C^0(M)}\leq \Lambda.
\]
Then the operator
\[
	\mathcal{L}: \Big( C^{2,\alpha}\big( {\rm Sym}^2(T^*M)\big), ||\cdot||_2 \Big) \longrightarrow \Big( C^{0,\alpha}\big( {\rm Sym}^2(T^*M)\big), ||\cdot||_0 \Big)
\]
is invertible, and 
\[
	||\mathcal{L}||_{\rm op}, ||\mathcal{L}^{-1}||_{\rm op} \leq C,
\]
where \(||\cdot||_2\) and \(||\cdot||_0\) are the norms defined in \Cref{def: hybridnorm}.
\end{prop}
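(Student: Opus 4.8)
The plan is to establish the uniform two-sided bound by separately proving (a) the easy boundedness $\|\mathcal{L}\|_{\rm op}\le C$, (b) the \emph{a priori estimate} $\|h\|_2\le C\|\mathcal{L}h\|_0$ for all $h\in C^{2,\alpha}\big(\Sym^2(T^*M)\big)$, with $C=C(n,\alpha,\Lambda,\delta)$, and then (c) deducing bijectivity of $\mathcal{L}$ from (b) together with classical Fredholm theory. The observation that makes step (c) work is that on the \emph{fixed} closed manifold $M$ the hybrid norms of \Cref{def: hybridnorm} are equivalent to the ordinary Hölder norms $\|\cdot\|_{C^{2,\alpha}(M)}$ and $\|\cdot\|_{C^{0,\alpha}(M)}$: the weight $e^{-(2\sqrt{n-2}-\delta)r_x}$ and the function $d(\cdot,M_{\rm thick})$ are bounded above and below on the compact set $M$, so bijectivity is insensitive to which pair of norms one uses, whereas the \emph{operator norm} of $\mathcal{L}^{-1}$ is controlled by the geometry-free constant coming out of (b). Throughout I write $f:=\mathcal{L}h$.

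For the weighted part of (b) I would fix $x\in M$ and apply \Cref{lem: weighted L^2-estimate} with $\varphi=\varphi_x$ a smooth approximation to $e^{-\frac12(2\sqrt{n-2}-\delta)r_x}$ satisfying $|\nabla\varphi_x|\le\big(\sqrt{n-2}-\tfrac{\delta}{2}+\eta\big)\varphi_x$ for a small $\eta>0$ (legitimate since $r_x$ is $1$-Lipschitz); note $\varphi_x h,\varphi_x f\in L^2(M)$ because $M$ is closed. Since $\big(\sqrt{n-2}-\tfrac{\delta}{2}\big)^2<n-2$ there is $\kappa_0=\kappa_0(n,\delta)>0$ with $(n-2)-|\nabla\varphi_x|^2\varphi_x^{-2}\ge\kappa_0$, and choosing $\varepsilon_0$ so small that $c(n)\varepsilon_0\le\kappa_0/2$, both the $|\nabla\varphi_x|^2$-term and the $|\sec+1|$-term on the right of \eqref{eq: weighted L^2-estimate} can be absorbed into the left side; this gives $\|h\|_{L^2(M;\omega_x)}\le C(n,\delta)\|f\|_{L^2(M;\omega_x)}$. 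A weighted Caccioppoli argument — integrating $\langle\nabla^*\nabla h,h\rangle$ against $\varphi_x^2$ and using $|\nabla\varphi_x|\le C\varphi_x$ — combined with the pointwise identity $\nabla^*\nabla h=2f-\Ric(h)-2(n-1)h$ and the curvature bound then promotes this to
\[
	\|h\|_{H^2(M;\omega_x)}\le C(n,\delta)\,\|f\|_{L^2(M;\omega_x)}.
\]
Multiplying by $e^{\frac12\lfloor\frac{n+1}{2}\rfloor d(x,M_{\rm thick})}$ (the same factor appears on the right-hand side) and taking $\sup_{x\in M}$ shows that the weighted part of $\|h\|_2$ is $\le C(n,\delta)\|f\|_0$.

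For the $C^{2,\alpha}$-part of (b) — the step that dictates the whole construction — I would pass to the universal cover $\pi:\tilde{M}\to M$, where $\inj(\tilde{M})=\infty$, $|\sec(\tilde{M})|\le 2$ (for $\varepsilon_0\le 1$) and $\|\nabla\Ric(\tilde{g})\|_{C^0}\le\Lambda$. By \Cref{lem: Nash-Moser} applied on $\tilde{M}$ together with interior Schauder estimates in Anderson's harmonic charts of a priori size \cite{And90} (whose coefficients are uniformly $C^{0,\alpha}$-bounded because $\|\nabla\Ric\|_{C^0}\le\Lambda$), there are a universal $\rho\in(0,\tfrac12]$ and $C=C(n,\alpha,\Lambda)$ with
\[
	\|h\|_{C^{2,\alpha}(M)}=\|\tilde{h}\|_{C^{2,\alpha}(\tilde{M})}\le C\Big(\sup_{\tilde{x}\in\tilde{M}}\|\tilde{h}\|_{L^2(B(\tilde{x},1/2))}+\|f\|_{C^{0,\alpha}(M)}\Big).
\]
Now I fix $x\in M$ and a lift $\tilde{x}$. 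By \Cref{Preimage counting - prop} (whose hypotheses $\sec(M)\in[-4,-\tfrac14]$ and $\sec=-1$ on $M_{\rm thin}$ hold) the continuous function $\omega(y):=C(n)\exp\!\big(\lfloor\tfrac{n+1}{2}\rfloor d(y,M_{\rm thick})\big)$ bounds $\#\big(\pi^{-1}(y)\cap B(\tilde{y},1)\big)$ for every $y$ and every lift $\tilde{y}$, so \Cref{lem: local L^2-norms and preimages} with $u=|h|^2$ gives $\int_{B(\tilde{x},1/2)}|\tilde{h}|^2\,d\vol_{\tilde{g}}\le C(n)\int_{B(x,1/2)}e^{\lfloor\frac{n+1}{2}\rfloor d(y,M_{\rm thick})}|h|^2(y)\,d\vol(y)$. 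Using $d(y,M_{\rm thick})\le d(x,M_{\rm thick})+\tfrac12$ and $r_x(y)\le\tfrac12$ on $B(x,\tfrac12)$, the right-hand side is
\[
	\le C(n,\delta)\,e^{\lfloor\frac{n+1}{2}\rfloor d(x,M_{\rm thick})}\!\int_M e^{-(2\sqrt{n-2}-\delta)r_x(y)}|h|^2(y)\,d\vol(y)\le C(n,\delta)\Big(e^{\frac12\lfloor\frac{n+1}{2}\rfloor d(x,M_{\rm thick})}\|h\|_{H^2(M;\omega_x)}\Big)^{2},
\]
which, by the previous paragraph, is $\le C(n,\delta)\|f\|_0^2$. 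Taking the supremum over $x$ and over all lifts $\tilde{x}$, and using $\|f\|_{C^{0,\alpha}(M)}\le\|f\|_0$, yields $\|h\|_{C^{2,\alpha}(M)}\le C\|f\|_0$; combined with the weighted bound this is the a priori estimate (b).

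Finally, for (a) and (c): the bound $\|\mathcal{L}h\|_0\le C\|h\|_2$ is immediate, since in harmonic charts $\mathcal{L}h$ is $D^2h$ plus curvature-coefficient terms in $h$ (giving the $C^{0,\alpha}$-component, with uniformly bounded coefficients), while $|\Delta_Lh|\le|\Delta h|+C(n)|h|$ pointwise (giving the weighted component). The a priori estimate shows $\mathcal{L}$ is injective with closed range; since on the fixed closed $M$ the hybrid norms are equivalent to the standard Hölder norms, surjectivity of $\mathcal{L}$ as a map $(C^{2,\alpha},\|\cdot\|_2)\to(C^{0,\alpha},\|\cdot\|_0)$ is equivalent to surjectivity of $\mathcal{L}:C^{2,\alpha}(M)\to C^{0,\alpha}(M)$, which holds because $\mathcal{L}=\tfrac12\Delta_L+(n-1)\mathrm{id}$ is elliptic and formally self-adjoint on a closed manifold, hence Fredholm of index zero, and injective. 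Then $\|\mathcal{L}^{-1}\|_{\rm op}$ equals the constant from (b), which depends only on $n,\alpha,\Lambda,\delta$. The main obstacle here is the $C^{2,\alpha}$-estimate: since $\inj(M)$ may be arbitrarily small one is forced onto $\tilde{M}$, and the only mechanism for converting the local $L^2$-mass of $\tilde{h}$ into something controlled uniformly downstairs is the preimage count of \Cref{Preimage counting - prop}; the exponential weights in \Cref{def: hybridnorm} are engineered precisely so that the growth factor $\exp\!\big(\lfloor\tfrac{n+1}{2}\rfloor d(\cdot,M_{\rm thick})\big)$ produced by that count is exactly the square of the prefactor $\exp\!\big(\tfrac12\lfloor\tfrac{n+1}{2}\rfloor d(\cdot,M_{\rm thick})\big)$ built into $\|\cdot\|_2$, which is what lets the estimate close.
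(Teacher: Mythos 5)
Your proposal is correct and follows essentially the same route as the paper: the weighted $L^2$-estimate with $\varphi=e^{-(\sqrt{n-2}-\delta/2)r_x}$ absorbing the gradient and curvature terms, promotion to a weighted $H^2$-bound by ellipticity, and the $C^{2,\alpha}$-bound obtained by lifting to the universal cover and converting $\|\tilde h\|_{L^2(B(\tilde x,1/2))}$ back to $M$ via \Cref{Preimage counting - prop} and \Cref{lem: local L^2-norms and preimages}. Your Fredholm/index-zero argument for surjectivity is a reasonable instantiation of what the paper dismisses as ``standard arguments,'' and your observation that bijectivity only needs the (geometry-dependent) equivalence of the hybrid and H\"older norms, while the uniform constant comes from the a priori estimate, is exactly the right way to close the loop.
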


\begin{proof}
Throughout the entire proof, we will abbreviate $f:=\mathcal{L}h$.

\smallskip
\textit{Step 1 (Boundedness):}
We start by showing that $\mathcal{L}$ is a bounded linear operator, i.e., $||f||_0 \leq C ||h||_2$ for all $h \in C^{2,\alpha}\big(\Sym^2(T^*M) \big)$. Clearly, $||f||_{C^{0,\alpha}(M)}, ||f||_{L^2(M)} \leq C||h||_2$. So it remains to prove, for all $x \in M_{\rm thin}^\prime$, 
\[
	e^{\frac{1}{2}\lfloor \frac{n+1}{2} \rfloor d(x,M_{\rm thick}^\prime)}||\eta_{T(x)}f||_{L^2(T_x;\omega_x)} \leq C||h||_2.
\]

Let $x \in M_{\rm thin}^\prime$ be arbitrary. We abbreviate $T \coloneq T(x)$ for the Margulis tube containing $x$, and $\eta \coloneq \eta_{T(x)}$ for the cut-off function given by \Cref{lem: cut-off fct}. An easy computation shows 
\begin{equation}\label{eq: L(eta h)}
	\mathcal{L}(\eta h)=\eta f - 2\tr(d\eta \otimes \nabla h)+(\nabla^\ast \nabla \eta)h.
\end{equation}
Clearly, 
\begin{equation}\label{eq: L bounded 1}
	||\mathcal{L}(\eta h)||_{L^2(T_x;\omega_x)} \leq C||\eta h||_{H^2(T_x,\omega_x)}.
\end{equation}
Moreover, as $||\eta||_{C^2(M)} \leq C$ by \Cref{lem: cut-off fct},
\begin{equation}\label{eq: L bounded 2}
	\int_{T}e^{-2\beta r_x} \big(|\tr(d\eta \otimes \nabla h)|^2+|(\nabla^\ast \nabla \eta)h|^2\big) \, d\vol \leq C \int_{{\rm supp}(\nabla \eta)}e^{-2\beta r_x} \big(|\nabla h|^2+|h|^2\big) \, d\vol
\end{equation}
However, ${\rm supp}(\nabla \eta) \subseteq M_{\rm thick}^\prime$ by \Cref{lem: cut-off fct}(i) and the definition (\ref{eq: def M'thin}) of $M^\prime_{\rm thick}$, whence
\begin{equation}\label{eq: L bounded 3}
	\int_{{\rm supp}(\nabla \eta)}e^{-2\beta r_x} \big(|\nabla h|^2+|h|^2\big) \, d\vol \leq e^{-2\beta d(x,M_{\rm thick}^\prime)}\int_M \big(|\nabla h|^2+|h|^2\big) \, d\vol.
\end{equation}
Since $2\beta > \left\lfloor \frac{n+1}{2} \right\rfloor$ by the choice (\ref{eq: def of beta}) of $\beta$, combining (\ref{eq: L(eta h)})-(\ref{eq: L bounded 3}) implies
\begin{align*}
	e^{\frac{1}{2}\left\lfloor \frac{n+1}{2} \right\rfloor d(x,M_{\rm thick}^\prime)}||\eta f||_{L^2(T_x,\omega_x)} 
	\leq &
	C \left(e^{\frac{1}{2}\left\lfloor \frac{n+1}{2} \right\rfloor d(x,M_{\rm thick}^\prime)} ||\eta h||_{L^2(T_x,\omega_x)}+||h||_{H^2(M)} \right) \\
	\leq & C ||h||_2,
\end{align*}
where in the second inequality we used the definition of $||\cdot||_2$ (see \Cref{def: hybridnorm}). This completes the proof that $\mathcal{L}$ is a bounded linear operator. 

\smallskip
\textit{Step 2 (Bounded inverse):}
It will suffice to prove the a priori estimate $||h||_2 \leq C||f||_0$ for all $h \in C^{2,\alpha}\big(\Sym^2(T^*M) \big)$. Indeed, given this a priori estimate, standard arguments show that $\mathcal{L}$ is surjective; consequently $\mathcal{L}$ is invertible and $||\mathcal{L}^{-1}||_{\rm op} \leq C$ thanks to the a priori estimate. 

\smallskip
\textit{Step 2.1 (Integral estimates):}
We know $||h||_{L^2(M)} \leq C||f||_{L^2(M)}$ due to Koiso's spectral gap estimate (\Cref{lem: Koiso}). Standard integration by parts arguments then imply (see \cite[Proof of Proposition 4.3]{HJ22} for more details)
\begin{equation}\label{eq: easy integral estimate}
	||h||_{H^2(M)} \leq C||f||_{L^2(M)}.
\end{equation}

Now let $x \in M_{\rm thin}^\prime$ be arbitrary. We again abbreviate $T \coloneq T(x)$ and $\eta \coloneq \eta_{T(x)}$. Since $\sec=-1$ in $T$, $T$ isometrically embeddeds into an infinite hyperbolic tube $T_\infty$. Therefore, from \Cref{lem: Koiso} and \Cref{lem: spectral gap in hyperbolic tube} we obtain
\begin{equation}\label{eq: L^2 in tube}
	\lambda_0 \int_T |h_T|^2 \, d\vol \leq 2\int_T \langle \mathcal{L}h_T,h_T \rangle \, d\vol
\end{equation}
for all $h_T \in C^{2,\alpha}\big(\Sym^2(T^*M) \big)$ with ${\rm supp}(h_T) \subseteq T$. Here $\lambda_0$ is given by (\ref{eq: def of lambda_0}). 

Thanks to \cite[Theorem 1]{Azagra2007} we may henceforth act as if $r_x=d_M(\cdot,x)$ were smooth. Invoking (\ref{eq: L^2 in tube}) with $h_T=e^{-\beta r_x} \eta h$, and using \Cref{rem: weighted L^2} with $\varphi=e^{-\beta r_x}$ and $\eta h$ instead of $h$, implies
\begin{align*}
	\lambda_0 \int_T e^{-2\beta r_x}|\eta h|^2 \, d\vol \leq & 2\int_T e^{-2\beta r_x}\langle \mathcal{L}(\eta h), \eta h \rangle \, d\vol \\	
	&+ \beta^2\int_T e^{-2\beta r_x} |\eta h|^2 \, d\vol,
\end{align*}
and therefore, as $\beta^2 < \lambda_0$ due to the choice (\ref{eq: def of beta}) of $\beta$,
\begin{equation}\label{eq: weighted L^2 in tube}
	\int_T e^{-2\beta r_x}|\eta h|^2 \, d\vol \leq C_0 \int_T e^{-2\beta r_x}\langle \mathcal{L}(\eta h), \eta h \rangle \, d\vol.
\end{equation}
Recall
\(
	\mathcal{L}(\eta h)=\eta f - 2\tr(d\eta \otimes \nabla h)+(\nabla^\ast \nabla \eta)h.
\)
Using the Cauchy--Schwarz inequality and $2ab \leq a^2+b^2$ for all $a,b \in \bbR$, we can bound $|\langle \eta f, \eta h \rangle| \leq \frac{1}{2C_0}|\eta h|^2 +\frac{C_0}{2}|\eta f|^2$. From (\ref{eq: weighted L^2 in tube}) we thus obtain
\begin{equation}\label{eq: almost weighted L^2}
	\begin{split}
	\int_T e^{-2\beta r_x}|\eta h|^2 \, d\vol \leq & C_0^2 \int_T e^{-2\beta r_x} |\eta f|^2 \, d\vol \\
	&+ 2C_0\int_{T}e^{-2\beta r_x}\langle - 2\tr(d\eta \otimes \nabla h)+(\nabla^\ast \nabla \eta)h, \eta h \rangle \, d\vol.
	\end{split}
\end{equation}
By the exact same arguments as in Step 1, we can bound the second term in (\ref{eq: almost weighted L^2}) by 
\begin{equation}\label{eq: almost weighted L^2 2}
	\int_{T}e^{-2\beta r_x}\langle - 2\tr(d\eta \otimes \nabla h)+(\nabla^\ast \nabla \eta)h, \eta h \rangle \, d\vol \leq e^{-\left\lfloor \frac{n+1}{2} \right\rfloor d(x,M_{\rm thick}^\prime)}||h||_{H^2(M)}^2.
\end{equation}
Combining (\ref{eq: almost weighted L^2}), (\ref{eq: almost weighted L^2 2}), (\ref{eq: easy integral estimate}) we finally obtain
\begin{align}\label{eq: harder integral estimate}
	e^{ \frac{1}{2}\left\lfloor \frac{n+1}{2} \right\rfloor d(x,M_{\rm thick}^\prime)}||\eta h||_{L^2(T_x,\omega_x)} &\leq
	 C \left(e^{ \frac{1}{2}\left\lfloor \frac{n+1}{2} \right\rfloor d(x,M_{\rm thick}^\prime)}||\eta f||_{L^2(T_x,\omega_x)} + ||f||_{L^2(M)} \right) \notag \\
	 &\leq C||f||_0,
\end{align}
where in the second inequality we used the definition of $||\cdot||_0$ (see \Cref{def: hybridnorm}). From the weighted $L^2$-estimate (\ref{eq: harder integral estimate}) one can again deduce weighted $H^2$-estimates 
\begin{equation}\label{eq: harder integral estimate 2}
	e^{ \frac{1}{2}\left\lfloor \frac{n+1}{2} \right\rfloor d(x,M_{\rm thick}^\prime)}||\eta h||_{H^2(T_x,\omega_x)} \leq C||f||_0
\end{equation} 
by standard techniques (see \cite[Proof of Proposition 4.3]{HJ22} for more details). This completes the proof of the integral estimates.

\smallskip
\textit{Step 2.2 (\(C^0\)-estimate):}
It remains to estimate \(||h||_{C^{2,\alpha}(M)}\).
Due to Schauder estimates (see for example \cite[Proposition 2.5]{HJ22}), it suffices to bound \(||h||_{C^0(M)}\) by $||f||_0$. Let $\tilde{f}$ and $\tilde{h}$ be the lifts of $f$ and $h$ to the universal cover $\tilde{M}$. Then $\mathcal{L}\tilde{h}=\tilde{f}$ in $\tilde{M}$. Note that $\tilde{M}$ satisfies the assumptions in \Cref{lem: Nash-Moser} with, say, $\iota = 1$ since $\sec(M) \in [-n,-K]$. We may assume without loss of generality that $\rho=\rho(n,\alpha,\Lambda,\iota) > 0$ given by \Cref{lem: Nash-Moser} is at most $\frac{\mu_n}{8}$. Thus, applying \Cref{lem: Nash-Moser} to $\mathcal{L}\tilde{h}=\tilde{f}$ yields for all $x \in M$ and every lift $\tilde{x} \in \tilde{M}$ of $x$
\begin{align}\label{eq: C^0 from local L^2}
	|h|(x)=|\tilde{h}|(\tilde{x})\leq& C\Big(||\tilde{h}||_{L^2(B(\tilde{x},\mu_n/8))}+||\tilde{f}||_{C^0(\tilde{M})} \Big) \notag\\
	=&C\Big(||\tilde{h}||_{L^2(B(\tilde{x},\mu_n/8))}+||f||_{C^0(M)} \Big)
\end{align}
We now make a case distinction.

We first consider the case $\inj(x) \geq \mu_n/8$. Then 
\[
	||\tilde{h}||_{L^2(B(\tilde{x},\mu_n/8))}=||h||_{L^2(B(x,\mu_n/8))} \leq ||h||_{L^2(M)},
\]
and thus we obtain $|h|(x) \leq C\big(||f||_{L^2(M)}+||f||_{C^0(M)} \big)$ by combining (\ref{eq: easy integral estimate}) and (\ref{eq: C^0 from local L^2}).

Now consider the case $\inj(x) \leq \mu_n/8$. Observe that \Cref{Preimage counting - prop} also holds with $M_{\rm thick}^\prime$ instead of $M_{\rm thick}$ (just replace $\mu_n$ by $\mu_n/4$ throughout \Cref{sec: Preimage counting}). So we can apply \Cref{lem: local L^2-norms and preimages} with 
\[
	\omega(y)=C \exp\left(\left\lfloor \frac{n+1}{2}\right\rfloor d\left(y, M_{\rm thick}^\prime \right)\right). 
\]
Hence
\begin{align}\label{eq: intermediate C^0 from L^2}
	\int_{B(\tilde{x},\mu_n/8)} |\tilde{h}|^2(\tilde{y}) \, d\vol(\tilde{y}) \leq & C\int_{B(x,\mu_n/8)}e^{\left\lfloor \frac{n+1}{2}\right\rfloor d(y, M_{\rm thick}^\prime)}|h|^2(y) \, d\vol(y) \notag \\
	\leq & C e^{\left\lfloor \frac{n+1}{2}\right\rfloor d(x, M_{\rm thick}^\prime)}\int_{B(x,\mu_n/8)}|h|^2(y) \, d\vol(y).
 \end{align}
Since $\inj$ is $1$-Lipschitz, we have $\inj \leq \frac{\mu_n}{4}$ on $B(x,\mu_n/8)$, and hence $\eta_{T(x)}=1$ in $B(x,\mu_n/8)$ by \Cref{lem: cut-off fct}(i). Moreover, $e^{-2\beta r_x}$ is uniformly bounded from below on $B(x,\mu_n/8)$. Thus, by the definition (\ref{eq: weighted L^2-norm}) of the semi-norm $||\cdot||_{L^2(T_x,\omega_x)}$,
\begin{equation*}
	 \int_{B(x,\mu_n/8)}|h|^2 \, d\vol \leq C\int_{T(x)}e^{-2\beta r_x}|\eta_{T(x)} h|^2 \, d\vol = C\left|\left|\eta_{T(x)}h \right|\right|_{L^2(T_x,\omega_x)}^2.
\end{equation*}
Combining this together with (\ref{eq: C^0 from local L^2}), (\ref{eq: intermediate C^0 from L^2}), (\ref{eq: harder integral estimate}) finally yields $|h|(x) \leq C||f||_0$.

Therefore, in either case we have shown $|h|(x) \leq C||f||_0$. As $x \in M$ was arbitrary, this implies $||h||_{C^0(M)} \leq C||f||_0$. Together with Schauder, (\ref{eq: easy integral estimate}), and (\ref{eq: harder integral estimate 2}) this completes the proof of the a priori estimate $||h||_2 \leq C||f||_0$.
\end{proof}

\section{Proof of the main results}\label{sec: Proof of main results}

We can now present the proofs of the results mentioned in the introduction. We start with \Cref{Main Theorem - introduction}, which follows from \Cref{prop: L is invertible} by a straightforward application of the inverse function theorem. 

\begin{proof}[Proof of \Cref{Main Theorem - introduction}]
We equip $C^{k,\alpha}\big(\Sym^2(T^\ast M)\big)$ with the hybrid norm $||\cdot||_k$ defined in \Cref{def: hybridnorm} ($k=0,2$); $B(h,r)$ shall denote the balls with respect to these norms. 
    
Any element in $B(\bar{g},1/2) \subseteq C^{2,\alpha}\big(\Sym^2(T^\ast M)\big)$ is a positive definite $(0,2)$-tensor, that is, a Riemannian metric on $M$. Let $\Phi=\Phi_{\bar{g}}$ be the Einstein operator defined in (\ref{eq: Def of Phi}), which we consider as an operator
\[
        \Phi: B(\bar{g},1/2) \subseteq C^{2,\alpha}\big(\Sym^2(T^\ast M)\big) \to
        C^{0,\alpha}\big(\Sym^2(T^\ast M)\big).
\]
Denote by $\mathcal{L}=(D\Phi)_{\bar{g}}$ the linearization of $\Phi$ at the background metric $\bar{g}$. By \Cref{prop: L is invertible} there exists a universal constant $C_0=C_0(n,\alpha,K,\Lambda)$ such that $\mathcal{L}$ is invertible with $||\mathcal{L}||_{\rm op}, ||\mathcal{L}^{-1}||_{\rm op} \leq C_0$. Moreover, by possibly enlarging $C_0$, the map $g \mapsto (D\Phi)_g$ is $C_0$-Lipschitz. Indeed, this follows from the same argument as in Step 1 in the proof of \Cref{prop: L is invertible} (also see \cite[Lemma 5.2]{HJ22}).

Therefore, applying (a quantitative version of) the inverse function theorem implies that there exist constants $\varepsilon_0^\prime=\varepsilon_0^\prime(n,\alpha,K,\Lambda) > 0$ and $C_0^\prime=C_0^\prime(n,\alpha,K,\Lambda)$ with the following property: For each $f \in C^{0,\alpha}\big(\Sym^2(T^\ast M)\big)$ with $||f-\Phi(\bar{g})||_0 \leq \varepsilon_0^\prime$ there exists a metric $g_f \in C^{2,\alpha}\big(\Sym^2(T^\ast M)\big)$ such that 
    \[
        \Phi(g_f)=f 
        \quad \text{and} \quad
        ||g_f-\bar{g}||_2 \leq C_0^\prime||f-\Phi(\bar{g})||_0.
    \]
Note that $\Phi(\bar{g})=\Ric(\bar{g})+(n-1)\bar{g}$. Since $||\cdot||_{C^{0,\alpha}} \leq C||\cdot||_{C^0}^{1-\alpha}||\cdot||_{C^1}^{\alpha}$, we have $||\Phi(\bar{g})||_0 \leq C \varepsilon^{1-\alpha}$ from the assumptions in \Cref{Main Theorem - introduction} and the \Cref{def: hybridnorm} of the hybrid norm $||\cdot||_0$. In particular, $f=0$ satisfies $||f-\Phi(\bar{g})|| \leq \varepsilon_0^\prime$ for $\varepsilon >0$ small enough. Thus, there exists a metric $g_0$ on $M$ such that
    \[
        \Phi(g_0)=0
        \quad \text{and} \quad
        ||g_0-\bar{g}||_2 \leq C \varepsilon^{1-\alpha}.
    \]
In particular, for $\varepsilon > 0$ small enough, $\sec(M,g_0) \leq -K/2 < 0$ as $\sec(M,\bar{g}) \leq - K < 0$ by \Cref{Main Theorem - introduction}(ii). Therefore, $\Phi(g_0)=0$ implies $\Ric(g_0)+(n-1)g_0=0$ due to \Cref{Zeros of Phi are Einstein}. This completes the proof.
\end{proof}

\Cref{cor: bounded negative curvature} now follows easily from \Cref{Main Theorem - introduction}.

\begin{proof}[Proof of \Cref{cor: bounded negative curvature}]
All constants implicitely depend on the choice of a Margulis constant \(\mu\). Therefore, if the constants we produce are allowed to depend on a constant \(\iota > 0\), we can without loss of generality assume \(\mu \leq \iota\). Thus, $\sec(M,\bar{g})=-1$ in $M_{\rm thin}$ since $\sec=-1$ outside $\Omega$ and ${\rm inj}(\Omega) \geq \iota$ by the assumptions of \Cref{cor: bounded negative curvature}. 

Moreover,
\[
	||{\rm Ric}(\bar{g})+(n-1)\bar{g}||_{L^2(M,\bar{g})}\leq {\rm vol}(\Omega)||{\rm Ric}(\bar{g})+(n-1)\bar{g}||_{C^0(M,\bar{g})} \leq v \varepsilon
\]
since $\sec=-1$, and hence ${\rm Ric}(\bar{g})+(n-1)\bar{g}=0$, outside $\Omega$. 

Therefore, for $\varepsilon > 0$ small enough (depending on $v$), the assumptions in \Cref{Main Theorem - introduction} follow from those in \Cref{cor: bounded negative curvature}, so that we can apply \Cref{Main Theorem - introduction} to obtain an Einstein metric on $M$ which is $C^{2,\alpha}$-close to $\bar{g}$.
\end{proof}

Finally, \Cref{cor: almost hyperbolic} trivially follows from \Cref{cor: bounded negative curvature}.

\bigskip

\noindent
\textsc{Département de Mathématiques, Université Libre de Bruxelles, Campus de la Plaine, CP 210, Boulevard du Triomphe, B-1050 Bruxelles, Belgique}\\
e-mail: frieder.jackel@ulb.be

\end{document}